\newfont{\Bb}{msbm10 scaled1200}
\newfont{\bb}{msbm8 scaled1200}
\newtheorem{thm}{Theorem}[section]
 \newtheorem{cor}[thm]{Corollary}
 \newtheorem{lem}[thm]{Lemma}
 \newtheorem{prop}[thm]{Proposition}
 \theoremstyle{definition}
 \newtheorem{defn}[thm]{Definition}
 \theoremstyle{remark}
 \newtheorem{rem}[thm]{Remark}
 \newtheorem*{ex}{Example}
 \numberwithin{equation}{section}
\begin{document}
\title[Fixed Point Theorems in multiplicative metric spaces]{Fixed points of multiplicative contraction mappings on multiplicative metric spaces}
 \author[Muttalip \"{O}zav\c{s}ar ]{Muttalip \"{O}zav\c{s}ar}
\address{Department
of Mathematics \\ 
Yildiz Technical University \\
Davutpasa-Esenler\\
P.O. Box 34210 \\
Turkey}

\email{minekci@yildiz.edu.tr}

\author[Adem C. \c{C}evikel ]{Adem C. \c{C}evikel}
\address{Department
of Mathematics Education \\
Yildiz Technical University \\
Davutpasa-Esenler\\
P.O. Box 34210 \\
Turkey}
\email{acevikel@yildiz.edu.tr}
\subjclass{54E40, 54E35, 54H25}
\keywords{Multiplicative metric space, Fixed point, Contraction mapping}
\begin{abstract}
In this paper, we first discussed multiplicative metric mapping by giving some topological properties of the relevant multiplicative metric space. As an interesting result of our discussions, we
observed that the set of positive real numbers $\mathbb{R}_+$ is a complete multiplicative metric space with respect to the multiplicative absolute value function.
Furthermore, we introduced concept of multiplicative contraction mapping and proved some fixed point theorems of such mappings on complete multiplicative metric spaces.
\end{abstract}
\pagenumbering{arabic}
\maketitle

\section{Introduction}
Let $X$ be a nonempty set. Multiplicative metric \cite{Bashirov} is a mapping $d:X\times X\rightarrow \mathbb{R}$ satisfying the following conditions:

\noindent (m1) $d(x,y)> 1$ for all $x,y \in X$ and $d(x,y)=1$ if and only if $x=y$,

\noindent (m2) $d(x,y)=d(y,x)$ for all $x,y \in X$,

\noindent (m3) $d(x,z)\leq d(x,y)\cdot
d(y,z)$ for all $x,y,z \in X$ (multiplicative triangle inequality).

\noindent In Section 2, we investigate multiplicative metric space by remarking
its topological properties. It is well known that the set of positive real numbers $\mathbb{R}_+$ is not complete according to the usual metric. To emphasize the importance of this study, we should first note that  $\mathbb{R}_+$ is a complete multiplicative metric space with respect to the multiplicative metric. 
Furthermore,  in Section 3, we introduce concept of multiplicative contraction mapping and prove some fixed point theorems of multiplicative contraction  mappings
on multipliactive metric spaces.
 
The study of fixed points of mappings satisfying certain contraction conditions has many applications and has been at the centre of various research activities \cite{Nadler, Takahashi, Suzuki, Mohiuddine, Soon, Kimura, Agarwal, Kimuraa}.
\section{Multiplicative Metric Topology}
We shall begin this section with the following examples, which will be used throughout the paper:

\begin{ex}
Let $\mathbb{R}_{+}^{n} $ be the collection of all $n$-tubles of positive real numbers. Let $d^*:\mathbb{ R}_{+}^{n}\times \mathbb{ R}_{+}^{n}\rightarrow \mathbb{R}$ be defined
as follows
\begin{equation*}
d^{\ast }\left( x,y\right) =\left\vert \frac{x_{1}}{y_{1}}\right\vert ^{\ast
}\cdot\left\vert \frac{x_{2}}{y_{2}}\right\vert ^{\ast }\cdot \cdot \cdot \left\vert \frac{x_{n}
}{y_{n}}\right\vert ^{\ast },
\end{equation*}
where $x=\left( x_{1},...,x_{n}\right) $, $y=\left( y_{1},...,y_{n}\right) \in \mathbb{ R}_{+}^{n} $
and $\left\vert~ . ~\right\vert ^{\ast }:\mathbb{R}_{+}\rightarrow \mathbb{R}_{+}$ is defined as follows%
\begin{equation*}
\left\vert a\right\vert ^{\ast }=\left\{ 
\begin{array}{c}
a\text{ \ \ \ \ if }a\geq 1 \\ 
\frac{1}{a}\text{ \ \ \ \ if }a<1
\end{array}
\right.
\end{equation*}
Then it is obvious that all conditions of multiplicative metric are satisfied.
\end{ex}

\noindent Note that this multiplicative metric is a generalization of  the multiplicative metric on $\mathbb{R}_+$ introduced in \cite{Bashirov}.
\begin{ex}
Let $a>1$ be a fixed real number. Then $d_a:\mathbb{R}^{n}\times 
\mathbb{R}^{n}\rightarrow 
\mathbb{R}
$ defined by 
\begin{equation*}
d_{a}(x,y):=\left\vert x-y\right\vert
_{a}:=\prod_{i=1}^{n}\left\vert \frac{a^{x_{i}}}{a^{y_{i}}}
\right\vert ^{\ast }
\end{equation*} holds the multiplicative metric conditions.
\end{ex} 
\noindent Note that $d_a$ satisfies the following relation:
\begin{equation*}
\prod\limits_{i=1}^{n}\left\vert \frac{a^{x_{i}}}{a^{y_{i}}}\right\vert
^{\ast }=a^{\sum\limits_{i=1}^{n}\left\vert x_{i}-y_{i}\right\vert}.
\end{equation*}
One can extend this multiplicative metric to $
\mathbb{C}^{n}$ by the following definition:
\begin{equation*}
d_{a}\left( \omega ,z\right) =\left\vert \omega -z\right\vert
_{a}=a^{\sum\limits_{i=1}^{n}\left\vert \omega _{i}-z_{i}\right\vert }.
\end{equation*}

The following proposition plays a key role in what follows.
\begin{prop}
(Multiplicative reverse triangle inequality). Let $( X, d) $ be a
multiplicative metric space. Then we have the following inequality
\begin{equation*}
\left\vert \frac{d(x,z)}{d(y,z)}\right\vert ^{\ast }\leq d(x,y)
\end{equation*}
for all $x,y,z\in X$.
\end{prop}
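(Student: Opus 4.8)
The plan is to reduce the claim to the defining property of $|\,\cdot\,|^{\ast}$, namely that for any positive real number $a$ one has $|a|^{\ast} = \max\{a, 1/a\}$. Indeed, if $a \geq 1$ then $1/a \leq 1 \leq a$, while if $a < 1$ then $a < 1 < 1/a$, so in both cases $|a|^{\ast}$ is the larger of $a$ and $1/a$. Consequently it suffices to establish the two bounds $d(x,z)/d(y,z) \leq d(x,y)$ and $d(y,z)/d(x,z) \leq d(x,y)$; once both are in hand, the larger of the two ratios, which is exactly $|d(x,z)/d(y,z)|^{\ast}$, is automatically bounded by $d(x,y)$.

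First I would apply the multiplicative triangle inequality (m3) directly to the triple $x, y, z$, obtaining $d(x,z) \leq d(x,y)\cdot d(y,z)$. Since every value of the metric is strictly positive (indeed at least $1$) by (m1), I may divide both sides by $d(y,z)$ without reversing the inequality, which gives the first desired estimate $d(x,z)/d(y,z) \leq d(x,y)$.

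Next I would exploit the symmetry (m2) to produce the companion bound. Applying (m3) to the triple $y, x, z$ yields $d(y,z) \leq d(y,x)\cdot d(x,z)$, and substituting $d(y,x) = d(x,y)$ from (m2) turns this into $d(y,z) \leq d(x,y)\cdot d(x,z)$. Dividing through by $d(x,z)$ then produces the second required estimate $d(y,z)/d(x,z) \leq d(x,y)$.

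Finally I would combine the two estimates: since both $d(x,z)/d(y,z)$ and $d(y,z)/d(x,z)$ are bounded above by $d(x,y)$, so is their maximum, and by the characterization of $|\,\cdot\,|^{\ast}$ recorded at the outset this maximum is precisely $|d(x,z)/d(y,z)|^{\ast}$. There is no genuine obstacle here; the only point deserving care is that the function $|\,\cdot\,|^{\ast}$ forces control of the ratio in \emph{both} directions rather than just one, which is exactly why the symmetric second application of (m3) via (m2) is indispensable.
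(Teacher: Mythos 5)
Your proposal is correct and follows essentially the same route as the paper: two applications of the multiplicative triangle inequality (to $x,y,z$ and to $y,x,z$ with symmetry), combined via the definition of $\left\vert\,\cdot\,\right\vert^{\ast}$. Your reformulation $\left\vert a\right\vert^{\ast}=\max\{a,1/a\}$ is just an equivalent packaging of the paper's two-sided bound $\frac{1}{d(x,y)}\leq \frac{d(x,z)}{d(y,z)}\leq d(x,y)$, so there is no substantive difference.
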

\begin{proof}
By the multiplicative triangle inequality, it is seen that 
\begin{equation}
\begin{aligned}
&d(x,z)\leq
d(x,y)\cdot d(y,z)\Rightarrow \frac{d(x,z)}{d(y,z)}\leq d(x,y),\\
&d(y,z)\leq d(y,x)\cdot d(x,z)\Rightarrow \frac{1}{d(x,y)}\leq \frac{d(x,z)}{d(y,z)
}.
\end{aligned}
\end{equation}
 Thus, by the definition of $\left\vert~.~ \right\vert ^{\ast }$ and (2.1), we have
\begin{eqnarray}
\frac{1}{d(x,y)}\leq \frac{d(x,z)}{d(y,z)}\leq d(x,y)\Leftrightarrow
\left\vert \frac{d(x,z)}{d(y,z)}\right\vert ^{\ast }\leq d(x,y)\nonumber.
\end{eqnarray}
\end{proof}
\begin{defn}
(Multiplicative balls): Let $(X,d)$ be a multiplicative
metric space, $x\in X$ and $\varepsilon > 1$. We now define a set 
\begin{equation*}
B_{\varepsilon }(x)=\left\{y\in X\mid d(x,y)<\varepsilon \right\},
\end{equation*}
which is called multiplicative open ball of radius $\varepsilon $ with center $x$. Similarly, one can describe multiplicative closed ball as 
\begin{equation*}
\overline{B}_{\varepsilon }(x)=\left\{y\in X\mid d(x,y)\leq \varepsilon \right\} \nonumber.
\end{equation*}
\end{defn}
 \begin{ex}
Consider the multiplicative metric space $(\mathbb{R_+},d^*)$.
From the definition of $d^*$, we can verify that multiplicative open ball of radius $\varepsilon>1$ with center
$x_0$ appears as $(\frac{x_0}{\varepsilon}, x_0\cdot \varepsilon)\subset \mathbb{R_+}$.
\end{ex}
\begin{defn}
(Multiplicative interior point): Let $(X, d)$ be a multiplicative metric space and  $A\subset X$. Then we call $x\in A$ a multiplicative
interior point of $A$ if there exists an $\varepsilon >1$ such that $%
B_{\varepsilon }(x)\subset A$. The collection of all interior points of $A$
is called multiplicative interior of $A$ and denoted by $int(A)$.
\end{defn}
\begin{defn}
(Multiplicative open set):
Let $(X, d)$ be a multiplicative metric space and  $A\subset X$. If every point of $A$ is a multiplicative interior point of $A$, i.e., $
A=int(A)$, then $A$ is called a multiplicative open set. 
\end{defn}
\begin{lem}
Let $(X, d)$ be a multiplicative metric space. Each multiplicative open ball  of $X$ is a multiplicative open set.
\end{lem}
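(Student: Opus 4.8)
The plan is to mimic the classical proof that open balls are open, but with the arithmetic of the multiplicative metric: additive sums are replaced by products and differences by quotients. Fix a multiplicative open ball $B_{\varepsilon}(x)$ with $\varepsilon > 1$. To show it is a multiplicative open set, I would take an arbitrary point $y \in B_{\varepsilon}(x)$ and produce a radius $\delta > 1$ such that $B_{\delta}(y) \subset B_{\varepsilon}(x)$. This exhibits $y$ as a multiplicative interior point of $B_{\varepsilon}(x)$, and since $y$ is arbitrary, we conclude $B_{\varepsilon}(x) = int(B_{\varepsilon}(x))$, which is precisely the definition of a multiplicative open set.

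The crucial step is the choice of the inner radius. Since $y \in B_{\varepsilon}(x)$ means $d(x,y) < \varepsilon$, I would set
\begin{equation*}
\delta := \frac{\varepsilon}{d(x,y)}.
\end{equation*}
Because $1 \le d(x,y) < \varepsilon$, this quotient satisfies $\delta > 1$, so $B_{\delta}(y)$ is a legitimate multiplicative open ball in the sense of the definition (which requires radius strictly greater than $1$). This is exactly the point where the additive intuition $\delta = \varepsilon - d(x,y)$ of the ordinary metric setting must be replaced by its multiplicative analogue $\delta = \varepsilon / d(x,y)$.

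It then remains to verify the inclusion $B_{\delta}(y) \subset B_{\varepsilon}(x)$. For any $z \in B_{\delta}(y)$ we have $d(y,z) < \delta$, and the multiplicative triangle inequality (m3) yields
\begin{equation*}
d(x,z) \le d(x,y)\cdot d(y,z) < d(x,y)\cdot \delta = d(x,y)\cdot \frac{\varepsilon}{d(x,y)} = \varepsilon,
\end{equation*}
so that $z \in B_{\varepsilon}(x)$. Hence every $z \in B_{\delta}(y)$ lies in $B_{\varepsilon}(x)$, giving the desired inclusion.

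In truth there is no serious obstacle in this argument; the only point requiring genuine care is confirming that $\delta > 1$, so that the inner ball is well-defined and nonempty under the convention that radii exceed $1$. This is exactly where the strict inequality $d(x,y) < \varepsilon$ is essential, and the multiplicative triangle inequality then closes the argument automatically.
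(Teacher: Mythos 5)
Your proof is correct and follows essentially the same route as the paper: the same choice $\delta = \varepsilon/d(x,y)$ and the same application of the multiplicative triangle inequality to get $d(x,z) \le d(x,y)\cdot d(y,z) < \varepsilon$. Your explicit verification that $\delta > 1$ is a small point of extra care that the paper leaves implicit.
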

\begin{proof}
Let $x\in X$ and $B_{\varepsilon}\left( x\right)$ be a multiplicative open ball. For $y \in B_{\varepsilon}\left( x\right)$, if we let $\delta= \frac{
\varepsilon }{d(x,y)}$  and $z\in B_{\delta}(y )$, then $d(y,z)<\frac{\varepsilon }{d(x,y)}$, from
which we conclude that 
\begin{equation*}
d(x,z)<d(x,y)\cdot d(y,z)<\varepsilon.
\end{equation*}
This shows that $z\in B_{\varepsilon}\left( x\right)$, which means that $B_{\delta}(y )\subset B_{\varepsilon}\left( x\right)$. Thus $B_{\varepsilon}\left( x\right)$ is multiplicative open set.
\end{proof}
\begin{lem}
 Let $(X, d)$ be a multiplicative metric space. Then $X$ and $\emptyset$ are multiplicative open sets.
\end{lem}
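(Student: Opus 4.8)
The plan is to verify directly that each of the two sets satisfies the defining condition of a multiplicative open set from the preceding definition, namely that every one of its points is a multiplicative interior point. No clever construction is needed; the entire argument rests on unwinding the definitions of the open ball and of $int(\cdot)$.

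First I would treat $X$ itself. Given any $x\in X$, I need only exhibit some $\varepsilon>1$ with $B_{\varepsilon}(x)\subset X$. But by the very definition of a multiplicative open ball, $B_{\varepsilon}(x)=\{y\in X\mid d(x,y)<\varepsilon\}$ is already a subset of $X$ for every choice of $\varepsilon>1$, so any such $\varepsilon$ (say $\varepsilon=2$) serves. Hence every point of $X$ is a multiplicative interior point, so $X=int(X)$, which is exactly the assertion that $X$ is multiplicative open.

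For the empty set I would argue vacuously. The requirement for $\emptyset$ to be open is that every $x\in\emptyset$ be a multiplicative interior point of $\emptyset$; since there is no such $x$, this universally quantified condition is satisfied trivially, and therefore $\emptyset=int(\emptyset)$ is multiplicative open.

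There is essentially no obstacle here: the reasoning uses nothing beyond the definitions of the balls and of open sets, and in particular does not invoke the multiplicative metric axioms (m1)--(m3). The only point deserving a moment's care is the treatment of $\emptyset$, where one should state explicitly that the emptiness of the quantification makes the interior-point requirement vacuously true, rather than attempt to produce an actual ball inside $\emptyset$.
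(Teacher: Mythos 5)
Your proof is correct: the paper states this lemma without proof, treating it as immediate, and your argument is precisely the standard one that omission presumes — every ball is by definition a subset of $X$, and the interior-point condition for $\emptyset$ holds vacuously. Your explicit care with the vacuous quantification over $\emptyset$ is exactly the right point to make precise.
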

\begin{lem}
The union of any finite, countable or uncountable family of  multiplicative open sets is also  a multiplicative open set.
\end{lem}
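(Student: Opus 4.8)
The plan is to argue directly from the definition of a multiplicative open set, reducing everything to the defining property of the multiplicative interior. First I would fix an arbitrary index set $I$ --- which may be finite, countable, or uncountable, so the argument must not rely on any finiteness --- together with a family $\{A_\alpha\}_{\alpha\in I}$ of multiplicative open sets, and set $A=\bigcup_{\alpha\in I}A_\alpha$. The goal is then to show that $A=int(A)$, i.e.\ that every point of $A$ is a multiplicative interior point of $A$.

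Next I would take an arbitrary point $x\in A$. By the definition of the union, there is some index $\beta\in I$ with $x\in A_\beta$. Since $A_\beta$ is multiplicative open, the point $x$ is a multiplicative interior point of $A_\beta$, so by Definition of the multiplicative interior there exists an $\varepsilon>1$ with $B_\varepsilon(x)\subset A_\beta$. Because $A_\beta\subset A$, this containment immediately upgrades to $B_\varepsilon(x)\subset A$, which is exactly the condition that $x$ be a multiplicative interior point of $A$. As $x\in A$ was arbitrary, every point of $A$ is interior, hence $A=int(A)$ and $A$ is a multiplicative open set.

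I expect no genuine obstacle in this argument, and that is itself the point worth flagging: the radius $\varepsilon$ is selected for a \emph{single} member $A_\beta$ of the family that happens to contain $x$, so no common radius obtained by intersecting over all of $I$ is ever required. This is precisely why the cardinality of $I$ is irrelevant here, in sharp contrast to the case of intersections, where only finitely many radii can be combined. The only points needing care in the write-up are to note that both the index $\beta$ and the radius $\varepsilon$ depend on the chosen $x$, and that it is the inclusion $A_\beta\subset A$ which transfers openness from the single piece to the whole union.
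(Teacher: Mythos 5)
Your proof is correct: the paper itself states this lemma without proof (treating it as immediate from the definitions), and your argument --- passing from $x$ in the union to a single member $A_\beta$ containing it, extracting $\varepsilon>1$ with $B_\varepsilon(x)\subset A_\beta\subset A$ --- is exactly the standard argument the authors implicitly rely on. Your closing remark contrasting unions with intersections also matches the paper's treatment, since its Lemma on intersections is precisely where a minimum of finitely many radii is needed.
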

\begin{lem}
The intersection of any finite family of multiplicative open sets is also a multiplicative open set.
\end{lem}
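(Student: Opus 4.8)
The plan is to argue directly from the definition of a multiplicative open set. Let $A_{1},\dots,A_{n}$ be a finite family of multiplicative open sets and put $A=\bigcap_{i=1}^{n}A_{i}$. It suffices to show that every point of $A$ is a multiplicative interior point, i.e. that $A=int(A)$. First I would fix an arbitrary $x\in A$. By definition of the intersection, $x\in A_{i}$ for each $i$, and since each $A_{i}$ is multiplicative open, for every $i$ there exists $\varepsilon_{i}>1$ with $B_{\varepsilon_{i}}(x)\subset A_{i}$.

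The key step is to merge these finitely many radii into a single admissible radius. I would set $\varepsilon=\min\{\varepsilon_{1},\dots,\varepsilon_{n}\}$. Because this is a minimum over a \emph{finite} collection of real numbers each strictly greater than $1$, we still have $\varepsilon>1$, so $\varepsilon$ is a legitimate radius for a multiplicative open ball. Then for any $y$ with $d(x,y)<\varepsilon$ we have $d(x,y)<\varepsilon_{i}$ for every $i$, whence $B_{\varepsilon}(x)\subset B_{\varepsilon_{i}}(x)\subset A_{i}$ for each $i$, and therefore $B_{\varepsilon}(x)\subset\bigcap_{i=1}^{n}A_{i}=A$. This shows $x\in int(A)$, and since $x\in A$ was arbitrary, $A=int(A)$ is a multiplicative open set.

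The only genuine obstacle is securing $\varepsilon>1$; once that is in hand, the chain of inclusions is immediate from the definition of the multiplicative balls in Definition 2.2. I would stress that finiteness is essential here: for an infinite family the quantity $\inf_{i}\varepsilon_{i}$ may equal $1$, leaving no admissible radius $\varepsilon>1$, which is precisely why the statement (unlike the union lemma) is restricted to finite intersections.
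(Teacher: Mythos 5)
Your proof is correct and follows essentially the same route as the paper's: fix a point in the intersection, take the minimum of the finitely many radii (which stays $>1$ precisely because the family is finite), and nest the balls. The only difference is cosmetic — the paper argues for two sets and lets the general finite case follow, while you handle $n$ sets directly and explicitly flag why the argument breaks for infinite families.
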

\begin{proof}
 Let $B_{1}$ and $B_{2}$
be two multiplicative open sets and $y\in B_{1}\cap B_{2}$. Then there are  $\delta _{1}, \delta _{2}>1$ such that $%
B_{\delta _{1}}(y)$ $\subset $ $B_{1}$ and $B_{\delta _{2}}(y)$ $\subset $ $%
B_{2}.$ Letting $\delta $ be the smaller of $\delta _{1}$ and $\delta _{2}$,
we conclude that $B_{\delta}(y )\subset B_{1}\cap B_{2}$. Hence the intersection of any finite family of multiplicative open sets is a multiplicative open set.
\end{proof}
\begin{thm}
Every multiplicative metric space is a topological space based on the set of all multiplicative open sets.
\end{thm}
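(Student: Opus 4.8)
The plan is to verify directly that the family $\tau$ consisting of all multiplicative open sets of $(X,d)$ satisfies the three defining axioms of a topology on $X$, since by definition a topological space is precisely a set equipped with such a family. First I would recall the axioms in their standard form: (i) both $X$ and $\emptyset$ belong to $\tau$; (ii) $\tau$ is closed under arbitrary unions; and (iii) $\tau$ is closed under finite intersections. The entire content of the theorem is then the observation that each of these three requirements has already been secured by the preceding lemmas, so the proof is essentially an assembly rather than a fresh argument.

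For axiom (i), the lemma stating that $X$ and $\emptyset$ are multiplicative open sets applies verbatim. For axiom (ii), the lemma asserting that the union of any finite, countable, or uncountable family of multiplicative open sets is again multiplicative open gives exactly closure under arbitrary unions. For axiom (iii), the lemma on intersections establishes the case of two open sets $B_1\cap B_2$, and I would then extend to an arbitrary finite family by a routine induction on the number of sets: assuming $B_1\cap\cdots\cap B_{k}$ is open, write $B_1\cap\cdots\cap B_{k+1}=(B_1\cap\cdots\cap B_{k})\cap B_{k+1}$ and apply the two-set lemma to the right-hand side.

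I expect no genuine obstacle here, as every ingredient is already in hand. The only points deserving explicit care are, first, to make clear that the finite-intersection lemma—phrased for two sets—propagates to arbitrary finite families via the induction above, and second, to respect the asymmetry between axioms (ii) and (iii): arbitrary intersections of open sets need not be open, so the matching of axioms to lemmas must retain the finiteness restriction in the intersection case while allowing unrestricted unions. Once this correspondence is stated, the conclusion that $\tau$ is a topology, and hence that $(X,\tau)$ is a topological space, follows immediately.
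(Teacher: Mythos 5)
Your proposal is correct and matches the paper's approach: the paper's own proof is simply ``By the results obtained until now, it is clear,'' i.e., it invokes exactly the preceding lemmas (that $X$ and $\emptyset$ are open, that arbitrary unions of open sets are open, and that finite intersections of open sets are open) to verify the topology axioms. Your write-up merely makes this assembly explicit, including the routine induction extending the two-set intersection lemma to finite families, which the paper leaves implicit.
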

\begin{proof}
By the results obtained until now, it is clear. 
\end{proof}
\begin{defn}
Let $(X,d)$ be a multiplicative metric space.  A point $x\in X$ is said to be multiplicative limit point of $S\subset X$ if and only if $\left( B_{\varepsilon}(x)\setminus \left\{x\right\} \right)\cap S\neq \emptyset$ for every $\varepsilon>1$. The set of all multiplicative limit points of the set $S$ is denoted by $S'$.
\end{defn}
\begin{defn}
Let $(X,d) $ be a multiplicative metric space. We call a set $S\subset X$ multiplicative closed in $(X,d)$ if $S$ contains all of its multiplicative limit points.
\end{defn}

The following propositions can be easily proven by the definition of multiplicative closed set:
\begin{prop}
Let $(X,d) $ be a multiplicative metric space and $S\subset X$. Then $S\cup S'$ is a multiplicative closed set. This set is called multiplicative closure of the set $S$, which is denoted by $\overline{S}$.
\end{prop}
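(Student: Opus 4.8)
The plan is to show that $\overline{S}=S\cup S'$ contains all of its own multiplicative limit points, i.e. $(S\cup S')'\subseteq S\cup S'$, which by the definition of multiplicative closed set is precisely the assertion. So I would fix an arbitrary multiplicative limit point $x$ of $S\cup S'$ and aim to prove $x\in S\cup S'$; in fact I will establish the sharper dichotomy that if $x\notin S$ then $x\in S'$, which suffices.

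First I would unwind the hypothesis: since $x\in(S\cup S')'$, for every $\varepsilon>1$ the set $(B_\varepsilon(x)\setminus\{x\})\cap(S\cup S')$ is nonempty. Fixing such an $\varepsilon$, I pick a point $y$ in this intersection. If $y\in S$, then immediately $(B_\varepsilon(x)\setminus\{x\})\cap S\neq\emptyset$ and nothing more is needed for this $\varepsilon$. The substantive case is $y\in S'\setminus S$, where $y$ is itself a limit point of $S$ lying in $B_\varepsilon(x)$ with $y\neq x$, and from this $y$ I must manufacture a genuine point of $S$ inside $B_\varepsilon(x)$ distinct from $x$.

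This middle step is where I expect the only real care to be required. Here I would invoke the earlier Lemma that every multiplicative open ball is multiplicative open: since $y\in B_\varepsilon(x)$, there is a radius $\delta_0>1$ (concretely $\delta_0=\varepsilon/d(x,y)$) with $B_{\delta_0}(y)\subseteq B_\varepsilon(x)$. The delicate part is that the nested ball must also exclude $x$, so I would shrink it by setting $\delta=\min(\delta_0,d(x,y))$. Because $x\neq y$ forces $d(x,y)>1$ and $\delta_0>1$, we still have $\delta>1$; moreover $d(x,y)\geq\delta$ guarantees $x\notin B_\delta(y)$, while $B_\delta(y)\subseteq B_{\delta_0}(y)\subseteq B_\varepsilon(x)$.

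Finally, applying the defining property of $y\in S'$ to the admissible radius $\delta>1$ produces a point $z\in(B_\delta(y)\setminus\{y\})\cap S$. Then $z\in B_\varepsilon(x)$ by the inclusion, and $z\neq x$ since $z\in B_\delta(y)$ whereas $x\notin B_\delta(y)$; hence $z\in(B_\varepsilon(x)\setminus\{x\})\cap S$. As $\varepsilon>1$ was arbitrary, this shows $x\in S'\subseteq S\cup S'$, which completes the argument. Apart from the radius bookkeeping just described, every remaining step is a direct unwinding of the definitions and the openness of balls.
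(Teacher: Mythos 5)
Your proof is correct. Note that the paper itself offers no argument for this proposition: it merely asserts that it "can be easily proven by the definition of multiplicative closed set," so yours fills in exactly the details the authors left out, and it does so by the natural route they presumably had in mind, namely showing $(S\cup S')'\subseteq S\cup S'$ via the openness of multiplicative balls (the paper's Lemma on open balls). The one genuinely delicate point, that the nested ball around $y$ must exclude $x$, is the step most often glossed over, and your choice $\delta=\min\bigl(\varepsilon/d(x,y),\,d(x,y)\bigr)$ together with the observation that $d(x,y)>1$ when $x\neq y$ handles it cleanly; the rest is, as you say, definition unwinding.
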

\begin{prop}
Let $(X, d)$ multiplicative metric space and $S\subset X$. $S$ is multiplicative closed if and only if $X\setminus S$, the complement of $S$, is multiplicative open.
\end{prop}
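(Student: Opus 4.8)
The plan is to prove both implications by directly unwinding the definitions of multiplicative closed set, multiplicative limit point, and multiplicative open set, so that the assertion ``$S$ contains all of its limit points'' is translated into the statement ``every point of the complement fails to be a limit point of $S$,'' which is precisely the openness of $X\setminus S$.

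For the forward direction I would assume $S$ is multiplicative closed and fix an arbitrary $x\in X\setminus S$. Since $S$ contains all of its multiplicative limit points and $x\notin S$, the point $x$ cannot belong to $S'$; hence, by the definition of multiplicative limit point, there exists some $\varepsilon>1$ for which $\left(B_{\varepsilon}(x)\setminus\{x\}\right)\cap S=\emptyset$. Invoking the hypothesis $x\notin S$, I would strengthen this to $B_{\varepsilon}(x)\cap S=\emptyset$, that is, $B_{\varepsilon}(x)\subset X\setminus S$. Thus $x$ is a multiplicative interior point of $X\setminus S$, and since $x$ was arbitrary we obtain $X\setminus S=int(X\setminus S)$, so $X\setminus S$ is multiplicative open.

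For the converse I would assume $X\setminus S$ is multiplicative open and argue by contrapositive that no point lying outside $S$ can be a limit point of $S$. Given any $x\in X\setminus S$, multiplicative openness supplies an $\varepsilon>1$ with $B_{\varepsilon}(x)\subset X\setminus S$, so that $B_{\varepsilon}(x)\cap S=\emptyset$ and, a fortiori, $\left(B_{\varepsilon}(x)\setminus\{x\}\right)\cap S=\emptyset$. Hence $x\notin S'$. Taking contrapositives yields $S'\subset S$, so $S$ contains all of its multiplicative limit points and is therefore multiplicative closed.

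The argument is essentially routine; the only point requiring genuine care is the passage between the punctured ball $B_{\varepsilon}(x)\setminus\{x\}$ that appears in the definition of a limit point and the full ball $B_{\varepsilon}(x)$ needed for interiority. This step is legitimate precisely because the point $x$ lies in the complement of $S$, so whether or not it is deleted is irrelevant to the intersection with $S$. I would state this observation explicitly in both directions, since it is exactly where the two definitions are reconciled.
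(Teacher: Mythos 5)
Your proof is correct; the paper itself offers no argument for this proposition (it is stated as one that ``can be easily proven by the definition of multiplicative closed set''), and your definitional unwinding is exactly the routine argument the authors intended to omit. Your explicit handling of the passage between the punctured ball $B_{\varepsilon}(x)\setminus\{x\}$ and the full ball $B_{\varepsilon}(x)$, justified by $x\notin S$, is the one point of genuine care, and you handle it correctly in both directions.
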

\begin{defn}
(Multiplicative continuity): Let $( X,d_X) $ and $( Y,d_Y) $ be two multiplicative metric spaces and $f:X \rightarrow Y$ be a function. If $f$
holds the requirement that, for every $\varepsilon >1 $, there exists $ \delta >1$ such that $f(B_{\delta}(x))\subset B_{\varepsilon}(f(x))$, then we call $f$ multiplicative continuous at $x\in X$.
\end{defn}
\begin{ex}
Given a multiplicative metric space $(X, d)$, define a multiplicative metric on $X\times X$ by
\begin{equation*}
\rho(( x_1,x_2),(y_1,y_2))=d(x_1,y_1)\cdot d(x_2,y_2).
\end{equation*}
Then the multiplicative metric $d:X\times X\rightarrow (\left[1,\infty\right), | . |^*)$ is multiplicative continuous on $X\times X$. To show this, let $(y_1, y_2), (x_1,x_2) \in X \times X $. Since we have
\begin{equation*}
\left\vert \frac{d(y_1,y_2)}{d(x_1,x_2)}\right\vert ^{\ast }\leq d(x_1,y_1)\cdot d(x_2,y_2),
\end{equation*}
it is clear that $d$ is multiplicative continuous on $X\times X$.
\end{ex}
\begin{defn}
(Semi-multiplicative continuity): Let $( X,d)$ be
a multiplicative metric space, $( Y,d)$ be a metric
space and $f:X \rightarrow Y$ be a
function. If $f$ holds the requirement that, for all $\varepsilon >0$,
there exists $ \delta >1$ such that $f(B_{\delta}(x))\subset B_{\varepsilon}(f(x))$, then we call $f$ semi-multiplicative continuous at $x\in X$. Similarly, a function $g:Y
\rightarrow X $ is also said to be
semi-multiplicative continuous at $y\in Y$if it satisfies a similar requirement. 
\end{defn}
\begin{ex}
Let $f:\left(\mathbb{R}_+, \left|.\right|^*\right)\rightarrow\left(\mathbb{R}, \left|.\right|\right) $ be a function defined as $f(x)=ln(x)$. \newline  Let ${\varepsilon>0}$ and 
$\left|ln(x)-ln(y)\right| <\varepsilon $. If we let $\delta=e^{\varepsilon}$, then we have $\left| \frac{x}{y}\right|^*<\delta$, which implies that $f$ is semi-multiplicative continuous on $\mathbb{R}_+$.
\end{ex}
\begin{ex}
Let $[a, b]\subset \mathbb{R}$ and $C^{\ast }[a,b]$ be the collection of all semi-multiplicative continuous
functions from $[a,b]$ to $\mathbb{R}_{+}$. Then the following mapping is a multiplicative metric on $C^{\ast }[a,b]$:
\begin{eqnarray}
d(f,g)=\max_{x\in \lbrack a,b]}\left\vert \frac{f(x)}{g(x)}
\right\vert ^{\ast } ;~ f,g \in C^{\ast }[a,b] .\nonumber 
\end{eqnarray}
\end{ex}

\begin{defn} (Multiplicative convergence):
Let $(X, d)$ be a multiplicative metric space, $(x_n)$ be a sequence in $X$ and $x\in X$.~If for every multiplicative open ball $B_{\varepsilon}(x )$,
there exists a natural number $N$ such that $n\geq N\Rightarrow x_{n}\in B_{\varepsilon}(x )$, then the sequence $(x_{n})$
is said to be multiplicative convergent to $x$, denoted by $
x_n\rightarrow_{*} x~(n\rightarrow \infty)$. 
 \end{defn}
\begin{lem}
Let $(X, d)$ be a multiplicative metric space, $(x_n)$ be a sequence in $X$ and $x\in X$. Then \newline
 $x_n\rightarrow_* x~(n\rightarrow \infty) $ if and only if $d(x_n,x)\rightarrow _*1~(n\rightarrow \infty )$. 
\end{lem}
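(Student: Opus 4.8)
The plan is to unwind both sides of the stated equivalence into the same elementary statement about the real sequence $\bigl(d(x_n,x)\bigr)_n$ lying in $\mathbb{R}_+$, so that the lemma collapses to a matter of definitions together with a single appeal to axiom (m1).

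First I would rewrite multiplicative convergence $x_n\rightarrow_* x$ in quantified form. Using the definition of multiplicative convergence together with the description of the balls $B_{\varepsilon}(x)=\{y\in X\mid d(x,y)<\varepsilon\}$, the assertion $x_n\rightarrow_* x$ is equivalent to: for every $\varepsilon>1$ there exists $N$ such that $n\geq N$ implies $d(x_n,x)<\varepsilon$. This is a purely verbal translation, with no computation involved.

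Next I would do the same for the right-hand side, taking care to read $\rightarrow_*$ there as multiplicative convergence of the sequence $\bigl(d(x_n,x)\bigr)_n$ to $1$ inside the multiplicative metric space $(\mathbb{R}_+,d^*)$. By the same definition, $d(x_n,x)\rightarrow_* 1$ unwinds to: for every $\varepsilon>1$ there exists $N$ such that $n\geq N$ implies $d^*\bigl(d(x_n,x),1\bigr)<\varepsilon$. The crucial computation is $d^*\bigl(d(x_n,x),1\bigr)=\left|d(x_n,x)\right|^*$, and since axiom (m1) forces $d(x_n,x)\geq 1$, the definition of $\left|\,.\,\right|^*$ yields $\left|d(x_n,x)\right|^*=d(x_n,x)$. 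Hence the right-hand condition reads precisely: for every $\varepsilon>1$ there exists $N$ such that $n\geq N$ implies $d(x_n,x)<\varepsilon$.

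Since both sides have thereby been reduced verbatim to one and the same condition, the two statements are logically identical and the equivalence is immediate — there is no need to argue the two implications separately. The only point demanding care, and the closest thing to an obstacle, is precisely the interpretation of $\rightarrow_*$ on the right as multiplicative convergence in $(\mathbb{R}_+,d^*)$ rather than ordinary convergence, combined with the observation that the distances $d(x_n,x)$ never drop below $1$; this is exactly what permits the operator $\left|\,.\,\right|^*$ to be discarded. Beyond this translation of definitions the argument carries no further content.
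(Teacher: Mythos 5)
Your proposal is correct and takes essentially the same route as the paper: both unwind the two convergence statements into quantified conditions on $d(x_n,x)$ and then use the behavior of $\left|\,\cdot\,\right|^*$ on distances, which by (m1) never drop below $1$. Your observation that $d^*\bigl(d(x_n,x),1\bigr)=\left|d(x_n,x)\right|^*=d(x_n,x)$, making the two conditions verbatim identical, is a slight streamlining — it dispatches both directions at once, whereas the paper argues the forward direction via the two-sided bound $\frac{1}{\varepsilon}<d(x_n,x)<\varepsilon$ and leaves the converse to the reader.
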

\begin{proof}
Suppose that the sequence $(x_n)$ is multiplicative convergent to $x$. That is, for every $\varepsilon > 1$, there is a natural number $N$ such that $d(x_n,x)< \varepsilon$ whenever $n\geq N$. Thus
we have the following inequality
\begin{equation*}
\frac{1}{\varepsilon}< d(x_n,x)< 1\cdot \varepsilon~\text{for all}~ n \geq N.
\end{equation*}
 This means $|d(x_n,x)|^{*}<\varepsilon$ for all $n\geq N$, which implies that the sequence $d(x_n,x)$ is multiplicative convergent to 1.

It is clear to verify the converse.
\end{proof}
\begin{lem}
Let $(X, d)$ be a multiplicative metric space, $(x_n)$ be a sequence in $X$. If the sequence $(x_n)$ is multiplicative convergent, then the multiplicative limit point is unique.
\end{lem}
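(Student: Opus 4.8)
The plan is to argue by contradiction, assuming the sequence $(x_n)$ converges multiplicatively to two points $x$ and $y$ and deriving a violation of the multiplicative triangle inequality (m3). First I would invoke the preceding lemma characterizing multiplicative convergence: from $x_n\rightarrow_* x$ and $x_n\rightarrow_* y$ we obtain $d(x_n,x)\rightarrow_* 1$ and $d(x_n,y)\rightarrow_* 1$. Since $d$ takes values in $[1,\infty)$ by (m1), convergence to $1$ here simply means that for every $\varepsilon>1$ there is an $N$ with $d(x_n,x)<\varepsilon$ and $d(x_n,y)<\varepsilon$ for all $n\geq N$.

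Suppose toward a contradiction that $x\neq y$; then (m1) gives $d(x,y)>1$, so $\varepsilon:=\sqrt{d(x,y)}>1$ is an admissible radius. Choosing $n$ large enough that both $d(x_n,x)<\varepsilon$ and $d(x_n,y)<\varepsilon$, the multiplicative triangle inequality (m3) together with symmetry (m2) yields
\begin{equation*}
d(x,y)\leq d(x,x_n)\cdot d(x_n,y)<\varepsilon\cdot\varepsilon=d(x,y),
\end{equation*}
which is absurd. Hence $d(x,y)=1$, and (m1) forces $x=y$, establishing uniqueness.

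The only point requiring care --- and the closest thing to an obstacle --- is the multiplicative bookkeeping: because the triangle inequality multiplies rather than adds the two error terms, one must split the target quantity $d(x,y)$ as the product $\sqrt{d(x,y)}\cdot\sqrt{d(x,y)}$, the multiplicative analogue of the classical $\varepsilon/2$ trick, rather than halving it. I would also note explicitly that $d(x_n,x)\geq 1$ forces $\left\vert d(x_n,x)\right\vert^{\ast}=d(x_n,x)$, so that convergence of $d(x_n,x)$ to $1$ in the sense of the preceding lemma really does deliver the one-sided bound $d(x_n,x)<\varepsilon$ used above. With these observations in place the argument is routine.
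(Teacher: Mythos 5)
Your proof is correct and takes essentially the same approach as the paper: both pass the multiplicative triangle inequality through $x_n$ and use the square-root splitting trick. The only difference is stylistic --- the paper keeps $\varepsilon>1$ arbitrary and bounds $d(x,y)<\varepsilon$ directly to conclude $d(x,y)=1$, whereas you fix the specific radius $\varepsilon=\sqrt{d(x,y)}$ and derive a contradiction.
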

\begin{proof}
Let $x,y\in X$ such that $x_n\rightarrow_* x$ and $x_n\rightarrow_* y~(n\rightarrow \infty)$. That is, for every $\varepsilon >1$, there exists $N$ such that, for all $n\geq N$, we have $d(x_n,x)< \sqrt{\varepsilon}$ and  $d(x_n,y)< \sqrt{\varepsilon}$. Then we have
\begin{equation*}
d(x,y)\leq d(x_n,x)\cdot d(x_n,y)<\varepsilon
\end{equation*}
Since $\varepsilon$ is arbitrary, $d(x,y)=1$. This says $x=y$.
\end{proof}
\begin{thm}
Let $(X, d_X)$ and $(Y, d_Y)$ be two multiplicative metric spaces, $f:X\rightarrow Y$ be a mapping and $(x_n)$ be any sequence in $X$. Then f is multiplicative continuous at the point $x\in X$ if and only if  $f(x_n)\rightarrow_{*} f(x)$ for every sequence $(x_n)$ with $x_n\rightarrow_{*}x~(n\rightarrow \infty)$.
\end{thm}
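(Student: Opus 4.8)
The plan is to prove both implications by the standard two-directional argument familiar from ordinary metric spaces, adapted to the multiplicative setting in which radii exceed $1$ and convergence is measured against $1$ via the earlier lemma $x_n\rightarrow_* x \Leftrightarrow d(x_n,x)\rightarrow_* 1$.

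First I would establish the forward implication. Assuming $f$ is multiplicative continuous at $x$ and that $x_n\rightarrow_* x$, I fix an arbitrary $\varepsilon>1$. Continuity supplies a $\delta>1$ with $f(B_{\delta}(x))\subset B_{\varepsilon}(f(x))$, and convergence of $(x_n)$ then supplies an $N$ with $x_n\in B_{\delta}(x)$ for all $n\geq N$. Applying $f$ gives $f(x_n)\in B_{\varepsilon}(f(x))$ for every $n\geq N$, which is exactly the statement $f(x_n)\rightarrow_* f(x)$. This direction is routine: it is a chase through the definitions of multiplicative continuity and multiplicative convergence, and uses no feature of the multiplicative structure beyond the shape of the balls.

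The reverse implication I would prove by contraposition. Suppose $f$ fails to be multiplicative continuous at $x$, so there is some $\varepsilon_0>1$ such that for every $\delta>1$ the inclusion $f(B_{\delta}(x))\subset B_{\varepsilon_0}(f(x))$ fails. The key step, and the one point where the multiplicative setting demands a choice of its own, is the construction of a witnessing sequence. In an ordinary metric space one shrinks the radius along $1/n$; here the radii must lie above $1$ and decrease to $1$, so I would take a sequence $\delta_n>1$ with $\delta_n\to 1$ (for instance $\delta_n=\varepsilon_0^{1/n}$, or simply $\delta_n=1+\frac{1}{n}$) and, for each $n$, pick $x_n\in B_{\delta_n}(x)$ with $f(x_n)\notin B_{\varepsilon_0}(f(x))$. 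Then $d_X(x_n,x)<\delta_n\to 1$, so the earlier convergence lemma yields $x_n\rightarrow_* x$; yet $d_Y(f(x_n),f(x))\geq\varepsilon_0>1$ for all $n$ prevents $f(x_n)\rightarrow_* f(x)$. This contradicts the hypothesis and completes the contrapositive.

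The only genuine obstacle is this radius-selection in the reverse direction: one must check that a sequence $\delta_n>1$ tending to $1$ legitimately drives $d_X(x_n,x)$ to $1$ in the multiplicative sense, which is precisely where the equivalence $x_n\rightarrow_* x \Leftrightarrow d(x_n,x)\rightarrow_* 1$ does the work (given $\varepsilon>1$, eventually $\delta_n<\varepsilon$, hence $d_X(x_n,x)<\varepsilon$). Everything else reduces to a direct unwinding of the definitions.
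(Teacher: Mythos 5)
Your proposal is correct and follows essentially the same route as the paper: the forward direction is the same definition chase through the continuity and convergence definitions, and the reverse direction is the same contrapositive argument choosing radii $\delta_n>1$ with $\delta_n\rightarrow 1$ and selecting witnesses $x_n\in B_{\delta_n}(x)$ whose images stay outside $B_{\varepsilon_0}(f(x))$. If anything, you are slightly more careful than the paper, which merely asserts ``it is clear that $x_n'\rightarrow_* x$'' where you justify it via the eventual inequality $\delta_n<\varepsilon$.
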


\begin{proof} 
Suppose that $f$ is multiplicative continuous at the point $x$ and $x_n\rightarrow_{*}x$. From the multiplicative continuity of $f$, we have that, for every $\varepsilon>1$, there exists $\delta>1$ such that $f\left(B_{\delta}(x)\right)\subset B_{\varepsilon}(f(x))$. Since $x_n\rightarrow_{*}x~(n\rightarrow \infty)$, there exists $N$ such that $n\geq N$ implies $x_n\in B_{\delta}(x)$. By virtue of the above inclusion, then  $f(x_n)\in B_{\varepsilon}(f(x))$ and hence $f(x_n)\rightarrow_{*} f(x)~(n\rightarrow \infty)$.

Conversely, assume that $f$ is not multiplicative continuous at $x$. That is, there exists an $\varepsilon>1$ such that, for each $\delta>1$,  we have $x' \in X$ with $d_X(x', x)<\delta$ but
\begin{equation}
 d_Y(f(x'), f(x))\geq \varepsilon. 
\end{equation}
Now, take any sequence of real numbers $(\delta_n)$ such that $\delta_n\rightarrow 1$ and $\delta_n>1$ for each $n$. For each $n$, select $x'$ that satisfies the equation (3) and 
call this $x_n'$.~It is clear that $x_n'\rightarrow_{*}x$, but $f(x_n')$ is not multiplicative convergent to $ f(x)$. Hence we see that if $f$ is not  multiplicative continuous, then not every sequence $(x_n)$ with $x_n\rightarrow_* x$ will yield a sequence $f(x_n)\rightarrow_{*}f(x)$. Taking the contrapositive of this statement demonstrates that the condition is sufficient.
\end{proof}

Similarly, we can prove the following theorem.
\begin{thm}
Let $(X, d_X)$ and $(Y, d_Y)$ be the usual metric space and multiplicative metric space, respectively. Let $f:X\rightarrow Y$ be a mapping and $(x_n)$ be any sequence in $X$. Then f is semi-multiplicative continuous at the point $x\in X$ if and only if $f(x_n)\rightarrow_{*} f(x)$ for every sequence $(x_n)$ with $x_n \rightarrow x~(n\rightarrow\infty)$.
\end{thm}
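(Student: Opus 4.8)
The plan is to mirror the two-directional argument of the preceding theorem, the only change being that the domain $X$ now carries an ordinary metric---so balls and convergence in $X$ are the usual ones---while the codomain $Y$ carries a multiplicative metric, so balls and convergence in $Y$ remain multiplicative. In this reversed setting the relevant semi-multiplicative continuity condition reads: for every $\varepsilon>1$ there exists $\delta>0$ such that $f(B_{\delta}(x))\subset B_{\varepsilon}(f(x))$, where $B_{\delta}(x)$ is an ordinary open ball in $X$ and $B_{\varepsilon}(f(x))$ is a multiplicative open ball in $Y$.

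For necessity, I would assume $f$ is semi-multiplicative continuous at $x$ and fix a sequence $x_n\rightarrow x$ in the usual metric. Given $\varepsilon>1$, continuity supplies $\delta>0$ with $f(B_{\delta}(x))\subset B_{\varepsilon}(f(x))$. Ordinary convergence then yields an index $N$ beyond which $d_X(x_n,x)<\delta$, i.e.\ $x_n\in B_{\delta}(x)$; applying $f$ together with the inclusion gives $f(x_n)\in B_{\varepsilon}(f(x))$ for all $n\geq N$, which by the earlier convergence lemma is precisely $f(x_n)\rightarrow_{*}f(x)$.

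For sufficiency, I would argue by contraposition exactly as before. If $f$ is not semi-multiplicative continuous at $x$, there is a fixed $\varepsilon>1$ such that for every $\delta>0$ one can find a point $x'$ with $d_X(x',x)<\delta$ yet $d_Y(f(x'),f(x))\geq\varepsilon$. Choosing real numbers $\delta_n>0$ with $\delta_n\rightarrow 0$ and selecting a corresponding witness $x_n'$ for each $n$ produces a sequence with $x_n'\rightarrow x$ in the ordinary metric but $f(x_n')\notin B_{\varepsilon}(f(x))$ for every $n$, so $f(x_n')$ cannot converge multiplicatively to $f(x)$; this contradicts the hypothesis and establishes the equivalence.

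I do not anticipate a genuine obstacle, since the argument is a routine transcription of the preceding theorem in which each multiplicative device on the domain side is replaced by its ordinary counterpart. The only point that demands a little care is keeping the two notions of radius apart---ordinary radii $\delta>0$ on the $X$-side versus multiplicative radii $\varepsilon>1$ on the $Y$-side---and invoking the convergence lemma correctly when passing between the estimate $d_Y(f(x_n),f(x))<\varepsilon$ and the statement $f(x_n)\rightarrow_{*}f(x)$.
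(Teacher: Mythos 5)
Your proposal is correct and is essentially the proof the paper intends: the paper gives no separate argument for this theorem, merely remarking that it follows ``similarly'' to the preceding sequential-characterization theorem, and your write-up is exactly that adaptation, with ordinary balls/radii $\delta>0$ on the domain side and multiplicative balls/radii $\varepsilon>1$ on the codomain side, including the same contrapositive argument (with $\delta_n\rightarrow 0$ replacing $\delta_n\rightarrow 1$) for sufficiency.
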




\begin{thm}
Let $(X,d)$ be a multiplicative metric space and $S\subset X$. Then \newline
(i) A point $x\in X$ belongs to $\overline{S}$ if and only if there exists a sequence $(x_n)$ in $S$ such that $x_n\rightarrow_* x~(n\rightarrow \infty)$.\newline
(ii) The set $S$ is multiplicative closed if and only if every multiplicative convergent sequence in $S$ has a multiplicative limit point that belongs to $S$.
\end{thm}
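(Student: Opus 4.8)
The plan is to establish part (i) first by unwinding the definition $\overline{S} = S \cup S'$, and then to deduce part (ii) almost immediately from (i) together with the observation that $S$ is multiplicative closed precisely when $S' \subseteq S$, i.e. when $\overline{S} = S$. The whole argument is a translation of the classical metric-space reasoning into the multiplicative language, so the essential content lives in part (i).

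For the forward implication of (i), I would suppose $x \in \overline{S} = S \cup S'$ and split into two cases. If $x \in S$, the constant sequence $x_n = x$ lies in $S$ and trivially satisfies $x_n \rightarrow_* x$. If instead $x \in S'$, I would construct the required sequence by shrinking radii: fix a sequence of real numbers $\varepsilon_n > 1$ with $\varepsilon_n \rightarrow 1$ (for instance $\varepsilon_n = 2^{1/n}$). Since $x$ is a multiplicative limit point of $S$, the set $\left(B_{\varepsilon_n}(x) \setminus \{x\}\right) \cap S$ is nonempty for every $n$, so I may choose $x_n$ in it. Then $d(x_n, x) < \varepsilon_n$, and given any $\varepsilon > 1$ there is an $N$ with $\varepsilon_n < \varepsilon$ for all $n \geq N$, whence $x_n \in B_{\varepsilon}(x)$; thus $x_n \rightarrow_* x$ directly from the definition of multiplicative convergence.

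For the converse of (i), I would take a sequence $(x_n)$ in $S$ with $x_n \rightarrow_* x$ and again argue by cases. If $x \in S$ there is nothing to prove. If $x \notin S$, then every $x_n \neq x$, and for each $\varepsilon > 1$ the convergence produces some $x_N \in B_{\varepsilon}(x) \cap S$ with $x_N \neq x$, so $\left(B_{\varepsilon}(x) \setminus \{x\}\right) \cap S \neq \emptyset$ and hence $x \in S'$. In either case $x \in S \cup S' = \overline{S}$. Part (ii) then follows: if $S$ is multiplicative closed then $\overline{S} = S$, so any multiplicative convergent sequence in $S$ has, by (i), its limit in $\overline{S} = S$; conversely, if every such sequence has its limit in $S$, then for any $x \in \overline{S}$ part (i) supplies a sequence in $S$ converging to $x$, forcing $x \in S$, so $\overline{S} \subseteq S$ and $S$ is closed.

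The only genuinely delicate point is the construction in the forward direction of (i): one must produce radii $\varepsilon_n > 1$ tending to $1$ so that the chosen points converge multiplicatively, and then verify that convergence directly from the ball definition, being careful that the punctured ball $B_{\varepsilon_n}(x) \setminus \{x\}$ used for a limit point is what guarantees $x_n \neq x$. Everything else is routine, and the uniqueness of the multiplicative limit established earlier ensures the phrase \emph{limit point} in part (ii) is unambiguous.
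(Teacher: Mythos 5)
Your proof is correct and takes essentially the same route as the paper: a case split over $\overline{S}=S\cup S'$, with a constant sequence when $x\in S$ and points chosen from balls of shrinking radii when $x\in S'$ (the paper takes $\varepsilon_n=1+\frac{1}{n}$ where you take $2^{1/n}$, an immaterial difference). You additionally write out the converse of (i) and part (ii), which the paper dismisses as easy, and you are in fact slightly more careful than the paper in selecting $x_n$ from the punctured ball $\left(B_{\varepsilon_n}(x)\setminus\{x\}\right)\cap S$, which is what the definition of multiplicative limit point actually provides.
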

\begin{proof} 

$(i)$ Let $x\in S$ . If we consider a sequence $(x_n)$ with $x_n=x$ for all $n$,  then this is a sequence in $S$ such that $x_n\rightarrow_* x$. Let $x\in S'$. Thus, for 
$\varepsilon_n=1+\frac{1}{n}$, we have  $B_{\varepsilon_n}(x)\cap S\neq \emptyset$. By choosing $x_n\in B_{\varepsilon_n}(x)\cap S$, we can set a sequence $(x_n)$ in $S$ such that
$x_n\rightarrow_* x~(n\rightarrow \infty)$.

It is easy to prove the converse. \newline
$(ii)$ From $(i)$, it is easily seen.
\end{proof}
\begin{defn}
Let $(X,d)$ be a multiplicative metric space and $(x_{n})$ be a sequence in $X$. The sequence is called a multiplicative Cauchy sequence if it holds that, for all $\varepsilon >1$, there exists $N\in \mathbb{N}$ such that $d(x_{m},x_{n})<\varepsilon$ for $m, n\geq N$.
\end{defn}
\begin{thm}
Let $(X,d)$ be a multiplicative metric space and $(x_{n})$ be a sequence in $X$. The sequence is multiplicative convergent, then it is a multiplicative Cauchy sequence.
\end{thm}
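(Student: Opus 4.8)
The plan is to argue directly from the definitions, mimicking the $\sqrt{\varepsilon}$ trick already used in the proof that multiplicative limits are unique. Suppose the sequence $(x_n)$ is multiplicative convergent, say $x_n\rightarrow_* x~(n\rightarrow\infty)$ for some $x\in X$. The goal is to verify the defining condition of a multiplicative Cauchy sequence: for every $\varepsilon>1$ one must produce a single $N\in\mathbb{N}$ controlling $d(x_m,x_n)$ simultaneously for all $m,n\geq N$.

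First I would fix an arbitrary $\varepsilon>1$ and apply the multiplicative convergence of $(x_n)$ to the open ball of radius $\sqrt{\varepsilon}$ (which is legitimate since $\sqrt{\varepsilon}>1$). This yields an $N$ such that $x_n\in B_{\sqrt{\varepsilon}}(x)$, i.e. $d(x_n,x)<\sqrt{\varepsilon}$, for every $n\geq N$. Equivalently, one may invoke the earlier lemma characterizing convergence as $d(x_n,x)\rightarrow_*1$ to obtain the same bound. The choice of radius $\sqrt{\varepsilon}$ rather than $\varepsilon$ is the only genuine design decision in the proof, and it is forced by the multiplicative nature of the triangle inequality, where the two error terms combine by multiplication rather than addition.

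Next I would estimate $d(x_m,x_n)$ for arbitrary $m,n\geq N$ using the multiplicative triangle inequality (m3) together with symmetry (m2):
\begin{equation*}
d(x_m,x_n)\leq d(x_m,x)\cdot d(x,x_n)=d(x_m,x)\cdot d(x_n,x)<\sqrt{\varepsilon}\cdot\sqrt{\varepsilon}=\varepsilon.
\end{equation*}
Since $\varepsilon>1$ was arbitrary and the same $N$ works for all indices $m,n\geq N$, this establishes that $(x_n)$ is a multiplicative Cauchy sequence, completing the argument. I do not anticipate any real obstacle here; the statement is the multiplicative analogue of the classical fact that convergent sequences in a metric space are Cauchy, and the entire content lies in selecting the radius $\sqrt{\varepsilon}$ so that the product of the two halving bounds recovers exactly $\varepsilon$.
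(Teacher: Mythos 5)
Your proof is correct and follows essentially the same route as the paper: apply multiplicative convergence with radius $\sqrt{\varepsilon}$ to get $d(x_n,x)<\sqrt{\varepsilon}$ for $n\geq N$, then combine two such bounds via the multiplicative triangle inequality to obtain $d(x_m,x_n)<\varepsilon$. There is nothing to add or correct.
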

\begin{proof}
Let $x\in X$ such that $x_n\rightarrow_* x$. Hence we have that 
for any $\varepsilon >1,$ there exist
a natural number $N$ such that $d(x_{n},x)<\sqrt{\varepsilon }$ and $d(x_{m},x)<\sqrt{\varepsilon }$ for all $
m,n\geq N$. By the multiplicative triangle inequality, we get
\begin{equation*}
d(x_{n},x_{m})\leq 
d(x_{n},x)\cdot d(x,x_{m})<\sqrt{\varepsilon }\cdot\sqrt{\varepsilon }=\varepsilon,
\end{equation*}
which implies $(x_n)$ is a multiplicative Cauchy sequence.
\end{proof}

\begin{thm}
(Multiplicative characterization of supremum)
Let $A$ be a nonempty subset of $ \mathbb{R}_+$. \newline 
Then $s=\sup{A}$ if and only if \newline
(i) $a\leq s$ for all $a\in A$ \newline
(ii) there exists at least a point $a\in A$ such that $|\frac{s}{a}|^*< \varepsilon$ for all $\varepsilon > 1$.
\end{thm}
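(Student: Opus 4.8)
The plan is to reduce this multiplicative statement to the familiar order-theoretic characterization of the supremum in $\mathbb{R}$, using condition (i) as the bridge. The key preliminary observation is that as soon as (i) holds, we have $\frac{s}{a}\geq 1$ for every $a\in A$, so that $\left\vert\frac{s}{a}\right\vert^{\ast}=\frac{s}{a}$, and consequently the multiplicative inequality $\left\vert\frac{s}{a}\right\vert^{\ast}<\varepsilon$ is equivalent to the ordinary inequality $a>\frac{s}{\varepsilon}$. Throughout, I read condition (ii) with the quantifiers in the natural order, namely that for every $\varepsilon>1$ there exists $a\in A$ with $\left\vert\frac{s}{a}\right\vert^{\ast}<\varepsilon$; this is precisely the multiplicative transcription of the $\varepsilon$-criterion for a least upper bound. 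The argument then splits into the two implications.

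For the forward direction, I would assume $s=\sup A$. Condition (i) is immediate, since the supremum is by definition an upper bound of $A$. For (ii), fix $\varepsilon>1$; since $s>0$ we have $\frac{s}{\varepsilon}<s$, so $\frac{s}{\varepsilon}$ cannot be an upper bound of $A$, and there must exist $a\in A$ with $a>\frac{s}{\varepsilon}$. Combining this with $a\leq s$ gives $\left\vert\frac{s}{a}\right\vert^{\ast}=\frac{s}{a}<\varepsilon$, which is exactly (ii).

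For the converse, I would assume (i) and (ii). Condition (i) already makes $s$ an upper bound, so it remains to show $s$ is the least one. Suppose, for contradiction, that some upper bound $t$ of $A$ satisfies $t<s$, and set $\varepsilon=\frac{s}{t}>1$. Applying (ii) with this $\varepsilon$ yields a point $a\in A$ with $\frac{s}{a}=\left\vert\frac{s}{a}\right\vert^{\ast}<\frac{s}{t}$, whence $a>t$, contradicting the assumption that $t$ is an upper bound. Hence no upper bound strictly smaller than $s$ exists, so $s=\sup A$. The argument is short and poses no genuine obstacle; the only point requiring care is the reduction $\left\vert\frac{s}{a}\right\vert^{\ast}=\frac{s}{a}$, which is legitimate only because (i) guarantees $\frac{s}{a}\geq 1$, and the correct reading of the quantifier order in (ii).
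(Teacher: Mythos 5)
Your proof is correct and follows essentially the same route as the paper's: both directions reduce to the classical least-upper-bound characterization, using $a\leq s$ to replace $\left\vert \frac{s}{a}\right\vert^{\ast}$ by $\frac{s}{a}$, and both exploit that $\frac{s}{\varepsilon}$ (respectively $\frac{s}{t}$) fails to be an upper bound. The only cosmetic differences are that you argue the forward direction directly where the paper argues by contradiction, and in the converse you compare $s$ with an arbitrary smaller upper bound $t$ rather than with $\sup A$ itself, which slightly cleans up the paper's implicit appeal to the existence of $\sup A$.
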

\begin{proof}
Let $s=\sup{A}$. Then from the definition of supremum, the condition $(i)$ is trivial. To prove the condition $(ii)$, assume that there is an $\varepsilon >1$ such that there are no elements
$a \in A$ such that $| \frac{s} {a} |^*< \varepsilon$. If this is the case, then $\frac{s}{\varepsilon}$ is also an upper bound for the set $A$. But this is impossible, since $s$ is the smallest upper bound for
$A$.

To prove the converse, assume that the number $s$ satisfies the conditons $(i)$ and $(ii)$. By the condition $(i)$, $s$ is an upper bound for the set $A$. If $s\neq \sup{A}$, then 
$s> \sup{A}$ and $\varepsilon = \frac{s}{\sup{A}}> 1$. By the condition $(ii)$, there exists at least a number $a \in A $ such that $|\frac{s}{a}|^*< \varepsilon$. By the definition of the number $\varepsilon$, 
this means that $a > \sup{A}$. This is impossible, hence $s=\sup{A}$. 
\end{proof}
Similarly, we can prove the following theorem.
\begin{thm}
(Multiplicative characterization of infimum)
Let $A$ be a nonempty subset of $ \mathbb{R}_+$. \newline 
Then $m=\inf{A}$ if and only if \newline
(i) $m\leq a$ for all $a\in A$ \newline
(ii) there exists at least a point $a\in A$ such that $|\frac{a}{m}|^*< \varepsilon$ for all $\varepsilon > 1$.
\end{thm}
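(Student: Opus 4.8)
The plan is to mirror the proof of the preceding multiplicative characterization of supremum, since the two statements are dual; the only substantive change is reversing the roles of upper and lower bounds. Throughout I would exploit the elementary observation that for $a\in A$ satisfying $m\leq a$ one has $a/m\geq 1$, so that by the definition of $|\cdot|^*$ the quantity $\left|\frac{a}{m}\right|^*$ simply equals $a/m$. This reduction converts the multiplicative condition (ii) into the ordinary inequality $a< m\varepsilon$, after which everything is routine.

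First I would establish the forward direction. Assuming $m=\inf A$, condition (i) is immediate from the definition of the infimum as a lower bound of $A$. For (ii), I would argue by contradiction: suppose there is some $\varepsilon>1$ for which no $a\in A$ satisfies $\left|\frac{a}{m}\right|^*<\varepsilon$. Using $\left|\frac{a}{m}\right|^*=a/m$, this says $a/m\geq\varepsilon$, i.e.\ $a\geq m\varepsilon$ for every $a\in A$. Hence $m\varepsilon$ is a lower bound of $A$, and since $\varepsilon>1$ we have $m\varepsilon>m$, contradicting that $m$ is the greatest lower bound.

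For the converse, I would assume $m$ satisfies (i) and (ii). By (i), $m$ is a lower bound for $A$, so $m\leq\inf A$. If $m\neq\inf A$, then $m<\inf A$, and I would set $\varepsilon=\frac{\inf A}{m}>1$. Applying (ii) to this $\varepsilon$ produces some $a\in A$ with $\left|\frac{a}{m}\right|^*=a/m<\varepsilon=\frac{\inf A}{m}$, whence $a<\inf A$, contradicting that $\inf A$ is a lower bound. Therefore $m=\inf A$.

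The argument is entirely routine once the simplification $\left|\frac{a}{m}\right|^*=a/m$ is in place, and in that sense there is no real obstacle. The one point demanding care is keeping the directions of all inequalities consistent when passing from the supremum version to the infimum version: in each half of the proof one must verify that the contradiction lands on the greatest-lower-bound property (rather than inadvertently on a least-upper-bound statement inherited from the template), which is precisely where a mechanical translation from the supremum proof could go wrong.
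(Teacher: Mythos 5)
Your proof is correct and is exactly what the paper intends: the paper itself gives no separate argument for the infimum version, stating only that it follows ``similarly'' to the supremum case, and your argument is precisely that dualization (with the helpful explicit observation that condition (i) forces $\left|\frac{a}{m}\right|^{\ast}=\frac{a}{m}$). Both directions land the contradiction on the greatest-lower-bound property, as required, so there is nothing to correct.
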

\begin{defn}
Let $(X, d)$ be a multiplicative metric space and $ A \subset X$. The set $A$ is called multiplicative bounded if  there exist $x\in X$ and $M> 1$
such that $A\subseteq B_M(x)$.
\end{defn}
\begin{thm}
A multiplicative Cauchy sequence is multiplicative bounded.
\end{thm}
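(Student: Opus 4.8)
The plan is to mimic the classical argument that Cauchy sequences are bounded: split the sequence into a tail controlled by the Cauchy condition and a finite initial segment controlled by a maximum, then fuse the two bounds into a single multiplicative ball. The product in the multiplicative triangle inequality will play the role of the sum in the additive setting.

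First I would invoke the multiplicative Cauchy property with a fixed radius, say $\varepsilon = 2 > 1$. This yields a natural number $N$ such that $d(x_m,x_n) < 2$ for all $m,n \geq N$. I would fix the center of the eventual ball to be the single point $x_N$; then the choice $m = N$ gives $d(x_N,x_n) < 2$ for every $n \geq N$, so the entire tail $\{x_n : n \geq N\}$ lies inside $B_2(x_N)$. For the finite initial segment $x_1,\dots,x_{N-1}$, each of the finitely many quantities $d(x_i,x_N)$ is a real number with $d(x_i,x_N) \geq 1$ by (m1), so the maximum $K := \max\{d(x_1,x_N),\dots,d(x_{N-1},x_N)\}$ is well defined and satisfies $K \geq 1$ (when $N=1$ this segment is empty and one simply sets $K=1$).

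Finally, setting $M := 2K$, I would check that $M > 1$ and that $d(x_N,x_n) < M$ for all $n$: for $n \geq N$ we have $d(x_N,x_n) < 2 \leq 2K = M$ since $K \geq 1$, while for $n < N$ we have $d(x_N,x_n) \leq K < 2K = M$ since $K \geq 1$. Hence $\{x_n : n \in \mathbb{N}\} \subseteq B_M(x_N)$, so by definition the sequence is multiplicative bounded with center $x_N$ and radius $M$.

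The only genuinely delicate point is bookkeeping rather than mathematics: because the ball $B_M(x)$ is defined by the \emph{strict} inequality $d(x,\cdot) < M$, one must take $M$ strictly larger than every distance that appears, which is why I use the factor $2K$ instead of the naive $M = \max\{2,K\}$; one must also confirm $M > 1$, as the definition of multiplicative boundedness requires. Everything else is a routine transcription of the additive Cauchy-implies-bounded argument into multiplicative form.
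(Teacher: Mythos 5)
Your proof is correct and follows essentially the same route as the paper: apply the Cauchy condition with $\varepsilon = 2$, center a ball at the index where the tail stabilizes, take the maximum of the finitely many distances from the initial segment, and enlarge the resulting bound to get a strict inequality (you use $2K$ where the paper squares its bound $M$). If anything, your bookkeeping on the strict inequality is slightly more careful than the paper's, which asserts $d(x_n,x_{n_0})<M$ even though equality can occur for the initial-segment terms before it passes to $M^2$.
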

\begin{proof}
Let $(X,d)$ be a multiplicative metric space and $(x_n)$ be a multiplicative Cauchy sequence in it. The definition of multiplicative Cauchy sequence 
implies that for $\varepsilon =2>1$, there exists a natural number $n_0$ such that $d(x_{n},x_{m})<2 $ for all $m,n\geq n_0$. Hence if 
we set 
\begin{equation*}
M=\max \{2, d(x_{1},x_{n_{0}}),...,d(x_{n_0-1},x_{n_{0}})\},
\end{equation*}
then it is clear that $d(x_{n},x_{n_{0}})<M$ for all $n\in \mathbb{N}$. Thus we have
\begin{equation*}
d(x_{n},x_{m})\leq d(x_{n},x_{n_{0}})\cdot d(x_{m},x_{n_{0}})<M^{2} ~\text{for all}~m,n\in \mathbb{N}.
\end{equation*}
 This says that the sequence is multiplicative bounded. 
\end{proof}
\begin{thm}
Let $(x_{n})$ and $(y_{n})$ be multiplicative Cauchy sequences in a
multiplicative metric space $(X,d)$. The sequence $(
d(x_{n},y_{n}))$ is also a multiplicative Cauchy sequence in the
multiplicative metric space $( \mathbb{R}_{+},d^{\ast }) .$
\end{thm}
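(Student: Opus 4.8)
The plan is to reduce the whole statement to a single multiplicative ``quadrilateral'' inequality that controls the ratio $d(x_m,y_m)/d(x_n,y_n)$ by the product $d(x_m,x_n)\cdot d(y_m,y_n)$, and then to feed in the Cauchy hypotheses on the two sequences. Recall that a sequence $(a_n)$ in $(\mathbb{R}_+,d^{\ast})$ is multiplicative Cauchy exactly when, for every $\varepsilon>1$, there is an $N$ with $d^{\ast}(a_m,a_n)=\left\vert a_m/a_n\right\vert^{\ast}<\varepsilon$ for all $m,n\geq N$. So the goal is to estimate $\left\vert d(x_m,y_m)/d(x_n,y_n)\right\vert^{\ast}$.

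First I would establish the inequality
\[
\left\vert \frac{d(x_m,y_m)}{d(x_n,y_n)}\right\vert^{\ast}\leq d(x_m,x_n)\cdot d(y_m,y_n)
\]
for all $m,n$. To bound $d(x_m,y_m)/d(x_n,y_n)$ from above, I would apply the multiplicative triangle inequality (m3) twice, obtaining $d(x_m,y_m)\leq d(x_m,x_n)\cdot d(x_n,y_m)\leq d(x_m,x_n)\cdot d(x_n,y_n)\cdot d(y_n,y_m)$, and then divide by $d(x_n,y_n)$. Interchanging the roles of the two index pairs gives, by the same double application of (m3), the symmetric bound $d(x_n,y_n)/d(x_m,y_m)\leq d(x_m,x_n)\cdot d(y_m,y_n)$. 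Combining the two then sandwiches the ratio between the product and its reciprocal, and by the definition of $\left\vert\,.\,\right\vert^{\ast}$ this is precisely the displayed inequality. This is the reverse-triangle argument of Proposition~2.3 carried out in two variables at once.

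Next I would invoke the Cauchy property. Given $\varepsilon>1$, since $(x_n)$ and $(y_n)$ are multiplicative Cauchy and $\sqrt{\varepsilon}>1$, I can choose $N$ so large that $d(x_m,x_n)<\sqrt{\varepsilon}$ and $d(y_m,y_n)<\sqrt{\varepsilon}$ for all $m,n\geq N$ (take the larger of the two thresholds produced for the value $\sqrt{\varepsilon}$). Substituting into the inequality above yields $\left\vert d(x_m,y_m)/d(x_n,y_n)\right\vert^{\ast}<\sqrt{\varepsilon}\cdot\sqrt{\varepsilon}=\varepsilon$ whenever $m,n\geq N$; that is, $d^{\ast}\!\big(d(x_m,y_m),d(x_n,y_n)\big)<\varepsilon$, which is exactly the definition of a multiplicative Cauchy sequence in $(\mathbb{R}_+,d^{\ast})$.

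The only substantive step is the first one: setting up the two-variable reverse triangle inequality. Everything afterward is a routine $\sqrt{\varepsilon}$-splitting. The point requiring care is ordering the applications of (m3) so that both the ratio and its reciprocal are bounded by the \emph{same} symmetric product $d(x_m,x_n)\cdot d(y_m,y_n)$; here the symmetry axiom (m2) is what lets me replace $d(y_n,y_m)$ by $d(y_m,y_n)$ and thereby match the two bounds.
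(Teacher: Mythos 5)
Your proof is correct and matches the paper's argument in all essentials: both hinge on the two-variable reverse triangle inequality $\left\vert d(x_n,y_n)/d(x_m,y_m)\right\vert^{\ast}\leq d(x_n,x_m)\cdot d(y_n,y_m)$, followed by exactly the same $\sqrt{\varepsilon}$-splitting of the two Cauchy hypotheses. The only cosmetic difference is in how that inequality is obtained: the paper factors the ratio through the intermediate term $d(x_m,y_n)$ and applies Proposition 2.3 to each factor, whereas you re-derive it inline by sandwiching the ratio between $d(x_m,x_n)\cdot d(y_m,y_n)$ and its reciprocal via two applications of (m3) in each direction --- the same computation, organized differently.
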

\begin{proof}
From the multiplicative reverse triangle inequality, we have
\begin{eqnarray}
d^{\ast }\left( d(x_{n,}y_{n}),d(x_{m},y_{m})\right)  &=&\left\vert \frac{%
d(x_{n,}y_{n})}{d(x_{m},y_{m})}\right\vert ^{\ast } \nonumber \\
&\leq &\left\vert \frac{d(x_{n},y_{n})}{d(x_{m},y_{n})}\right\vert ^{\ast
}.\left\vert \frac{d(x_{m},y_{n})}{d(x_{m},y_{m})}\right\vert ^{\ast } \nonumber \\
\text{ \ \ \  \ \ \ \ } &\leq &d(x_{n},x_{m})\cdot d(y_{n},y_{m}). \nonumber
\end{eqnarray}%
Since $(x_{n})$ and $(y_{n})$ are multiplicative Cauchy
sequences,  for any $\varepsilon >1$, there exists $N\in \mathbb{N}$ such that $d(x_{n},x_{m})<\sqrt{\varepsilon }$
and $d(y_{n},y_{m})<\sqrt{\varepsilon }$ for all $n,m\geq N$. This implies $d^{\ast }\left( d(x_{n,}y_{n}),d(x_{m},y_{m})\right) <\varepsilon$ 
for all $n,m \geq N$, which says $(
d(x_{n},y_{n}))$ is a multiplicative Cauchy sequence. 
\end{proof}
\begin{lem}
Let $(X,d)$ be a multiplicative metric space and $(x_n)$ be a sequence in $X$. Then $(x_n)$ is a multiplicative Cauchy sequence if and only if $d(x_n,x_m)\rightarrow_* 1~(m,n\rightarrow \infty)$.
\end{lem}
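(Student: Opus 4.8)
The plan is to reduce the statement to a direct comparison of the two definitions, exploiting the fact that the multiplicative distance is always at least $1$, so that the multiplicative absolute value acts trivially on it.

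First I would record the key observation: by axiom (m1) we have $d(x_n,x_m)\geq 1$ for all $m,n$, and hence $\left\vert d(x_n,x_m)\right\vert^{\ast}=d(x_n,x_m)$ directly from the definition of $\left\vert~.~\right\vert^{\ast}$. This single remark is what makes the whole equivalence collapse, since it strips the absolute value out of the analysis and leaves a bare inequality in $d$.

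Next I would unwind what the convergence $d(x_n,x_m)\rightarrow_* 1$ means in the space $(\mathbb{R}_+,d^{\ast})$. Treating $d(x_n,x_m)$ as a double-indexed sequence in $\mathbb{R}_+$, the statement is that for every $\varepsilon>1$ there is an $N$ such that $m,n\geq N$ forces $d(x_n,x_m)\in B_{\varepsilon}(1)$, i.e. $d^{\ast}\!\left(d(x_n,x_m),1\right)<\varepsilon$. By the definition of $d^{\ast}$ this last inequality is exactly $\left\vert d(x_n,x_m)\right\vert^{\ast}<\varepsilon$, which by the opening observation is nothing but $d(x_n,x_m)<\varepsilon$. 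This is precisely the defining condition of a multiplicative Cauchy sequence, so both implications follow at once the moment the definitions are fully unwound.

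Since no genuine obstacle remains, the only point requiring a little care is the bookkeeping of the two indices $m,n$ in the convergence-to-$1$ statement; this is handled exactly as in the earlier Lemma characterizing $x_n\rightarrow_* x$ by $d(x_n,x)\rightarrow_* 1$, with the role of $x$ now played by $1\in\mathbb{R}_+$ and the role of the single sequence played by the double sequence $d(x_n,x_m)$. I would simply carry that argument through verbatim, replacing the single index by the pair $(m,n)$ and noting throughout that, because $d\geq 1$, the multiplicative absolute value never contributes.
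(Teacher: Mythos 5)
Your proof is correct and takes essentially the same route as the paper: both simply unwind the definition of multiplicative Cauchy sequence and of multiplicative convergence in $(\mathbb{R}_+,\left\vert\,\cdot\,\right\vert^{\ast})$ and observe that the two $\varepsilon$--$N$ conditions coincide. Your explicit remark that $d(x_n,x_m)\geq 1$ forces $\left\vert d(x_n,x_m)\right\vert^{\ast}=d(x_n,x_m)$ is a slight polish, since it makes both implications immediate at once, whereas the paper writes out only the forward direction and dismisses the converse as easy.
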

\begin{proof}
Let $(x_n)$ be a multiplicative Cauchy sequence. Then, for every $\varepsilon >1$, there exists $N$ such that $d(x_n,x_m)< \varepsilon$ for all $m,n \geq N$. Hence,
from the definition of  $\left|.\right|^*$, we have $|d(x_n,x_m) |^*< \varepsilon$ whenever $n,m\geq N$. This means $d(x_n,x_m)\rightarrow_* 1~(m,n\rightarrow \infty)$ in $(\mathbb{R}_+,\left|.\right|^*)$.

It is easy to prove the sufficiency side of Lemma.
\end{proof}

\begin{thm}
Let $(X,d)$ be a multiplicative metric space. Let $(x_n)$ and $(y_n)$ be two sequences in $X$ such that $x_n\rightarrow_* x$, $y_n\rightarrow_* y~(n\rightarrow \infty),~x, y\in X$. Then 
\begin{equation*}
d(x_n,y_n)\rightarrow_*d(x,y)~(n\rightarrow\infty).
\end{equation*}
\end{thm}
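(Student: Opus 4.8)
The plan is to prove the claim directly by showing that $d^{\ast}\bigl(d(x_n,y_n),d(x,y)\bigr)\rightarrow_* 1$, since by the convergence lemma (namely, $z_n\rightarrow_* z$ iff $d^{\ast}(z_n,z)\rightarrow_* 1$ in $(\mathbb{R}_+,d^{\ast})$) this is precisely the assertion $d(x_n,y_n)\rightarrow_* d(x,y)$. The engine of the argument is the multiplicative reverse triangle inequality (the Proposition of Section~2), applied twice after inserting a suitable intermediate quantity; this is the same device already used to prove that $(d(x_n,y_n))$ is a multiplicative Cauchy sequence.

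First I would use the multiplicative triangle inequality for $d^{\ast}$ on $\mathbb{R}_+$, inserting the intermediate point $d(x,y_n)$, to obtain
\begin{equation*}
\left\vert\frac{d(x_n,y_n)}{d(x,y)}\right\vert^{\ast} \leq \left\vert\frac{d(x_n,y_n)}{d(x,y_n)}\right\vert^{\ast}\cdot\left\vert\frac{d(x,y_n)}{d(x,y)}\right\vert^{\ast}.
\end{equation*}
Next I would bound each factor by the reverse triangle inequality. Applying the Proposition with $z=y_n$ gives $\left\vert\frac{d(x_n,y_n)}{d(x,y_n)}\right\vert^{\ast}\leq d(x_n,x)$; for the second factor, using the symmetry $d(x,y_n)=d(y_n,x)$ and $d(x,y)=d(y,x)$ and applying the Proposition with $z=x$ gives $\left\vert\frac{d(x,y_n)}{d(x,y)}\right\vert^{\ast}\leq d(y_n,y)$. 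Combining these,
\begin{equation*}
d^{\ast}\bigl(d(x_n,y_n),d(x,y)\bigr)\leq d(x_n,x)\cdot d(y_n,y).
\end{equation*}

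Finally I would invoke the hypotheses. Since $x_n\rightarrow_* x$ and $y_n\rightarrow_* y$, the convergence lemma yields $d(x_n,x)\rightarrow_* 1$ and $d(y_n,y)\rightarrow_* 1$; hence for any $\varepsilon>1$ there is an $N$ with $d(x_n,x)<\sqrt{\varepsilon}$ and $d(y_n,y)<\sqrt{\varepsilon}$ whenever $n\geq N$, so the displayed bound is $<\varepsilon$ for $n\geq N$. This proves $d(x_n,y_n)\rightarrow_* d(x,y)$. I expect no genuine difficulty here; the only point requiring care is the bookkeeping in the middle step, where one must choose the intermediate quantity $d(x,y_n)$ and exploit the symmetry of $d$ so that the reverse triangle inequality produces exactly the two controlled terms $d(x_n,x)$ and $d(y_n,y)$.
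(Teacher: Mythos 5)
Your proof is correct, and it arrives at exactly the same key estimate as the paper, namely $\left\vert \frac{d(x_n,y_n)}{d(x,y)}\right\vert^{\ast}\leq d(x_n,x)\cdot d(y_n,y)$, followed by the same $\sqrt{\varepsilon}$ bookkeeping; the difference lies entirely in how that estimate is derived. The paper stays inside $X$: it applies the multiplicative triangle inequality twice, once to get $d(x_n,y_n)\leq d(x_n,x)\cdot d(x,y)\cdot d(y_n,y)$ and once to get $d(x,y)\leq d(x_n,x)\cdot d(x_n,y_n)\cdot d(y_n,y)$, and then combines the two resulting ratio bounds into the displayed inequality via the definition of $\left\vert\,\cdot\,\right\vert^{\ast}$. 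You instead work in $(\mathbb{R}_+,d^{\ast})$: you insert the intermediate quantity $d(x,y_n)$, invoke the triangle inequality of the multiplicative metric $d^{\ast}$, and control each factor by two applications of the already-established reverse triangle inequality (the Proposition of Section 2). Your route reuses existing machinery and runs exactly parallel to the paper's own proof that $(d(x_n,y_n))$ is a multiplicative Cauchy sequence when $(x_n)$ and $(y_n)$ are, so it is arguably the more economical and uniform treatment; the paper's route is more self-contained, requiring nothing beyond the triangle inequality in $X$ and the definition of $\left\vert\,\cdot\,\right\vert^{\ast}$, and in particular not the fact that $d^{\ast}$ is itself a multiplicative metric. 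Either way the argument is complete.
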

\begin{proof}
Let $x_n\rightarrow _*x$ and $y_n\rightarrow_* y$ as $n\rightarrow \infty$. That is, for all $\varepsilon>1$, there exists $N\in \mathbb{N}$ such that, for all $n\geq N$, we have $d(x_n,x)< \sqrt{\varepsilon}$
and $d(y_n,y)< \sqrt{\varepsilon}$. On the other hand, by the multiplicative triangle inequality, we have 
\begin{eqnarray}
d(x_n,y_n)\leq d(x_n,x)\cdot d(x,y)\cdot d(y_n,y), \nonumber \\
d(x,y)\leq d(x_n,x)\cdot d(x_n,y_n)\cdot d(y_n,y), \nonumber
\end{eqnarray}
which imply
\begin{eqnarray}
\frac{d(x_n,y_n)}{d(x,y)}\leq d(x_n,x)\cdot d(y_n,y) ,\nonumber \\
\frac{d(x,y)}{d(x_n,y_n)}\leq d(x_n,x)\cdot d(y_n,y) .\nonumber
\end{eqnarray}
Hence, from the definition of $\left| .\right|^*$, we have
\begin{equation*}
\left| \frac{d(x_n,y_n)}{d(x,y)}\right|^* \leq d(x_n,x)\cdot d(y_n,y)< \varepsilon~\text{for all}~n\geq N.
\end{equation*} 
This says $d(x_n,y_n)\rightarrow_*d(x,y)~(n\rightarrow \infty)$ in $(\mathbb{R}_+,\left|.\right|^*)$.
\end{proof}

\begin{thm}
Let $(x_n)$ be a multiplicative Cauchy sequence in a multiplicative metric
space $(X,d)$. If the sequence $(x_n)$ has a subsequence $(x_{n_{k}})$ such that  $x_{n_{k}}\rightarrow_{\ast}x\in X~(n_k\rightarrow \infty)$,
then $(x_{n})\rightarrow_{\ast}x~(n\rightarrow \infty)$.
\end{thm}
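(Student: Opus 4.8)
The plan is to exploit the multiplicative triangle inequality to compare $d(x_n,x)$ against distances measured along the convergent subsequence. The key device is the familiar splitting of a target radius $\varepsilon>1$ into $\sqrt{\varepsilon}\cdot\sqrt{\varepsilon}$, combining the Cauchy property of $(x_n)$ with the convergence of the subsequence $(x_{n_k})$.

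First I would fix an arbitrary $\varepsilon>1$ and invoke the Cauchy hypothesis to obtain $N_1\in\mathbb{N}$ with $d(x_n,x_m)<\sqrt{\varepsilon}$ for all $m,n\geq N_1$. Next, using the earlier lemma that characterizes convergence (namely that $x_{n_k}\rightarrow_{*}x$ is equivalent to $d(x_{n_k},x)\rightarrow_{*}1$), I would find $K$ such that $d(x_{n_k},x)<\sqrt{\varepsilon}$ for all $k\geq K$.

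Then I would select a single subsequence index $n_{k_0}$ satisfying both $k_0\geq K$ and $n_{k_0}\geq N_1$; such an index exists because the subsequence indices $n_k$ are strictly increasing and tend to infinity. For any $n\geq N_1$ the multiplicative triangle inequality then gives
\begin{equation*}
d(x_n,x)\leq d(x_n,x_{n_{k_0}})\cdot d(x_{n_{k_0}},x)<\sqrt{\varepsilon}\cdot\sqrt{\varepsilon}=\varepsilon,
\end{equation*}
where the first factor is controlled by the Cauchy bound (both indices exceed $N_1$) and the second by the subsequence bound. Hence $d(x_n,x)<\varepsilon$ for every $n\geq N_1$, which is precisely the statement $x_n\rightarrow_{*}x$ as $n\rightarrow\infty$.

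The only delicate point is the simultaneous index selection in the third step: one must ensure that the chosen element $x_{n_{k_0}}$ lies deep enough in the original sequence for the Cauchy estimate to apply, while also being far enough along the subsequence for the convergence estimate to apply. Since $n_k\rightarrow\infty$, this causes no difficulty, and everything else reduces to a direct application of results already established in the paper.
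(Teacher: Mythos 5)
Your proposal is correct and takes essentially the same route as the paper: both split $\varepsilon$ as $\sqrt{\varepsilon}\cdot\sqrt{\varepsilon}$ and apply the multiplicative triangle inequality through a subsequence element, using the Cauchy bound for one factor and subsequence convergence for the other. The only cosmetic difference is that you compare every $x_n$ against a single fixed anchor $x_{n_{k_0}}$, whereas the paper lets the anchor $x_{n_k}$ move with the index $k$ and takes a maximum of several thresholds; both index selections are valid, and yours is in fact slightly tidier bookkeeping.
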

\begin{proof}
Let $(x_n)$ be multiplicative Cauchy sequence and $\varepsilon>1$. Thus there exists $N\in \mathbb{N}$ such that
 $d(x_{n},x_{m})<\sqrt{\varepsilon }$ for $m,n\geq N$.
Let $(x_{n_{k}})$ be a subsequence of $(x_n)$ such that $x_{n_{k}}\rightarrow_{\ast} x$. Then there exists $k_{1} \in \mathbb{N}$ such that $d(x_{n_{k}},x)<\sqrt{\varepsilon }$
for all $k\geq k_1$. Morever, $\lim_{k\rightarrow \infty }{n_{k}}=\infty $ implies that there
exists $k_{2}\in \mathbb{N}$ such that $n_{k}\geq N$ for all $k\geq k_{2}.$ Hence we have
\begin{equation*}
d(x_{k},x)\leq d(x_{k},x_{n_{k}})\cdot d(x_{n_{k}},x)<\sqrt{\varepsilon }\cdot\sqrt{
\varepsilon }=\varepsilon \nonumber~\text{ for all }~k\geq \max \{n,k_{1},k_{2}\}.
\end{equation*}
 This shows
$x_{n}\rightarrow_{\ast} x~(n\rightarrow \infty)$.
\end{proof}
\begin{lem}
Every sequence in $\mathbb{R}$ has a monotone subsequence.
\end{lem}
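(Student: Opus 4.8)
The plan is to use the classical "peak index" argument together with a case analysis. First I would introduce the relevant notion: call an index $m\in\mathbb{N}$ a \emph{peak} of the sequence $(x_n)$ if $x_m\geq x_n$ for every $n>m$, that is, if the $m$-th term dominates all later terms. The entire proof then splits according to whether the set of peak indices is infinite or finite, and in each case a monotone subsequence will drop out almost immediately.

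In the first case, suppose there are infinitely many peak indices, which I would list in increasing order as $m_1<m_2<m_3<\cdots$. Since each $m_k$ is a peak and $m_{k+1}>m_k$, the defining property yields $x_{m_k}\geq x_{m_{k+1}}$ for every $k$. Hence $(x_{m_k})$ is a non-increasing subsequence, which is monotone, and we are done.

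In the second case, suppose there are only finitely many peak indices. Then I would choose a natural number $N$ larger than all of them, so that no index $n\geq N$ is a peak. Set $n_1=N$. Because $n_1$ is not a peak, the negation of the defining property furnishes an index $n_2>n_1$ with $x_{n_2}>x_{n_1}$; because $n_2$ is likewise not a peak, there is $n_3>n_2$ with $x_{n_3}>x_{n_2}$, and inductively one constructs indices $n_1<n_2<n_3<\cdots$ satisfying $x_{n_1}<x_{n_2}<x_{n_3}<\cdots$. This produces a strictly increasing, hence monotone, subsequence. Since exactly one of the two cases must occur, every sequence in $\mathbb{R}$ admits a monotone subsequence.

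I do not expect any genuinely hard part here; the only real content is the clever definition of a peak index, after which both branches are routine. If I were to flag a subtle point, it would be verifying that the inductive construction in the second case never stalls: at each step the newly selected index is again $\geq N$ and therefore again not a peak, so the recursion can be continued indefinitely and the resulting subsequence is truly infinite.
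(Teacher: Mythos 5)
Your proof is correct. Note that the paper itself states this lemma without any proof at all: it is invoked as a classical fact from real analysis, serving only as a stepping stone (together with the following lemma on monotone multiplicative bounded sequences) toward the multiplicative Bolzano--Weierstrass corollary and the completeness of $(\mathbb{R}_{+},\left\vert \cdot \right\vert^{\ast})$. So there is no proof in the paper to compare against; your peak-index argument is the standard textbook proof of this result, both cases are handled correctly, and you rightly flag and resolve the one delicate point, namely that in the second case every newly chosen index again exceeds $N$, is therefore not a peak, and so the inductive construction never terminates prematurely.
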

\begin{lem}
A monotone  multiplicative bounded sequence in $(
\mathbb{R}_{+},\left\vert \cdot \right\vert ^{\ast })$  is multiplicative convergent.
\end{lem}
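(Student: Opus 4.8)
The plan is to prove the non-decreasing case and obtain the non-increasing case by the symmetric argument with the infimum. So suppose $(x_n)$ is non-decreasing in the usual order on $\mathbb{R}_+$ and multiplicative bounded. By the definition of multiplicative boundedness this means $\{x_n\}\subseteq B_M(x)$ for some $x\in\mathbb{R}_+$ and $M>1$, and by Example~2.5 this ball is the ordinary interval $\left(\frac{x}{M},\,xM\right)$. Hence the set $A=\{x_n:n\in\mathbb{N}\}$ is a nonempty subset of $\mathbb{R}_+$ that is bounded above (by $xM$) and below (by $\frac{x}{M}>0$) in the usual order, so $s=\sup A$ exists and, since $s\geq x_1>0$, is a genuine positive real number.

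Next I would claim that $x_n\rightarrow_* s$. By the lemma characterizing multiplicative convergence it suffices to show $d^{*}(x_n,s)=\left|\frac{x_n}{s}\right|^{*}\rightarrow_* 1$; that is, for every $\varepsilon>1$ there is an $N$ with $\left|\frac{x_n}{s}\right|^{*}<\varepsilon$ whenever $n\geq N$. Fix $\varepsilon>1$. By condition (ii) of the multiplicative characterization of supremum there is an index $N$ with $\left|\frac{s}{x_N}\right|^{*}<\varepsilon$, and by condition (i) we have $x_n\leq s$ for every $n$. The decisive step is to promote this single close term to an entire tail: since $(x_n)$ is non-decreasing, for $n\geq N$ we have $x_N\leq x_n\leq s$, so $1\leq\frac{s}{x_n}\leq\frac{s}{x_N}$ and therefore $\left|\frac{x_n}{s}\right|^{*}=\left|\frac{s}{x_n}\right|^{*}=\frac{s}{x_n}\leq\frac{s}{x_N}=\left|\frac{s}{x_N}\right|^{*}<\varepsilon$. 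This supplies the required $N$ and establishes $x_n\rightarrow_* s$.

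For the non-increasing case the set $A$ is bounded below by $\frac{x}{M}>0$, so $m=\inf A$ exists as a positive real, and the symmetric argument, now using the multiplicative characterization of infimum together with $x_n\geq x_N\geq m$ for $n\geq N$, yields $x_n\rightarrow_* m$.

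I expect the only real obstacle to be bookkeeping rather than depth. One must first translate \emph{multiplicative bounded} into ordinary order-boundedness through Example~2.5, so that $\sup A$ (respectively $\inf A$) is available as a positive real; and then use monotonicity to upgrade the existence of a single term $x_N$ with $\left|\frac{s}{x_N}\right|^{*}<\varepsilon$ into the statement that the whole tail lies within multiplicative distance $\varepsilon$ of $s$. The identity $|a|^{*}=\left|\frac{1}{a}\right|^{*}$, together with the fact that all the relevant ratios are at least $1$, is what makes the $|\cdot|^{*}$ computations collapse to ordinary quotients.
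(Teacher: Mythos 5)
Your proposal is correct and follows essentially the same route as the paper's own proof: reduce to the non-decreasing case, set $s=\sup\{x_n\}$, invoke the multiplicative characterization of supremum to find one term $x_N$ with $\left\vert \frac{s}{x_N}\right\vert^{\ast}<\varepsilon$, and use monotonicity together with $x_n\leq s$ to push the bound onto the whole tail. The only difference is that you supply a detail the paper leaves implicit — translating multiplicative boundedness into ordinary order-boundedness (via the interval form of multiplicative balls) so that the supremum is guaranteed to exist as a positive real — which is a welcome tightening rather than a different method.
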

\begin{proof}
WLOG, let $(x_n)$ be increasing multiplicative bounded sequence in $\mathbb{R}_{+}$ and $x=\sup x_{n}$. Our claim is that $x_{n}\rightarrow_{*} x~(n\rightarrow \infty)
$. By the multiplicative characterization of supremum, for any $\varepsilon >1$, there exists $x_{n}$ such that $\left\vert \frac{
x_{n}}{x}\right\vert ^{\ast }<\varepsilon $. Set $n=N$. Then, by the
monotonicity, $x_{m}>x_{N}$ for all $ m>n$. But as $x_{m}<x$, this
implies that $\left\vert \frac{x_{m}}{x}\right\vert ^{\ast }<\varepsilon $ for all $ m>N$. Thus $x_{n}\rightarrow_{*} x~(n\rightarrow \infty)$.
\end{proof}

An immediate corollary of  the results obtained until now is multiplicative counterpart of
the Bolzano-Weierstrass theorem in the classical analysis.

\begin{cor}
(Multiplicative Bolzano-Weierstrass) A multiplicative bounded sequence in $(
\mathbb{R}_{+},\left\vert \cdot \right\vert ^{\ast })$
has a multiplicative convergent subsequence.
\end{cor}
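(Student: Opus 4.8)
The plan is to combine the two preceding lemmas directly, so the argument will be short; this is why the statement is labelled a corollary. Let $(x_n)$ be a multiplicative bounded sequence in $(\mathbb{R}_{+},|\cdot|^{*})$. Since $\mathbb{R}_{+}\subset\mathbb{R}$, the sequence $(x_n)$ is in particular a sequence of real numbers, so the first lemma (every sequence in $\mathbb{R}$ has a monotone subsequence) applies verbatim and produces a monotone subsequence $(x_{n_k})$.

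The next step is to check that this subsequence inherits multiplicative boundedness. By the definition of multiplicative bounded set there exist $y\in\mathbb{R}_{+}$ and $M>1$ with $\{x_n\}\subseteq B_M(y)$; since each $x_{n_k}$ is one of the terms $x_n$, we automatically get $\{x_{n_k}\}\subseteq B_M(y)$. Thus $(x_{n_k})$ is a monotone, multiplicative bounded sequence in $(\mathbb{R}_{+},|\cdot|^{*})$. Finally I would invoke the second lemma, which asserts that every monotone multiplicative bounded sequence in $(\mathbb{R}_{+},|\cdot|^{*})$ is multiplicative convergent, and apply it to $(x_{n_k})$ to conclude that this subsequence multiplicative converges to a point of $\mathbb{R}_{+}$. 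This exhibits a multiplicative convergent subsequence of $(x_n)$, as required.

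There is essentially no serious obstacle, since both technical ingredients have already been proved. The only points deserving a moment's care are the two compatibility checks made above: first, that the notion of monotonicity for the real sequence $(x_n)$ is precisely the hypothesis consumed by the second lemma (monotonicity is an order property of the values, independent of whether one measures distance in the usual or the multiplicative way); and second, that passing to a subsequence cannot destroy multiplicative boundedness, which is immediate from the inclusion of index sets. Both are routine, so the corollary follows.
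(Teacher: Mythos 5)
Your proposal is correct and matches the paper's intended argument exactly: the paper offers no separate proof, presenting the corollary as an immediate consequence of the two preceding lemmas (existence of a monotone subsequence of any real sequence, and multiplicative convergence of monotone multiplicative bounded sequences), which is precisely the chain you assembled. Your explicit checks that monotonicity and multiplicative boundedness pass to the subsequence are routine but correctly identified as the only points needing verification.
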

\begin{defn}
We call a multiplicative metric space complete if every multiplicative
Cauchy sequence in it is multiplicative convergent to $x\in X$.
\end{defn}
\begin{ex}
All the results obtained until now indicate that $(R_{+},\left\vert
\cdot \right\vert ^{\ast })$ is complete. 
\end{ex}

\begin{thm}
Let $(X,d)$ be a complete multiplicative metric space and $S\subset X$. Then $(S,d)$ is complete if and only if $S$ is multiplicative closed.
\end{thm}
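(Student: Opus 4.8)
The plan is to establish both implications through the sequential characterization of multiplicative closed sets, together with two facts proved above: that every multiplicative convergent sequence is a multiplicative Cauchy sequence, and that a multiplicative convergent sequence has a unique limit. No new machinery is needed; the whole argument is a matter of correctly invoking these results in the right order.

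For the forward direction, suppose $S$ is multiplicative closed and let $(x_n)$ be any multiplicative Cauchy sequence in $S$. Then $(x_n)$ is in particular a multiplicative Cauchy sequence in the complete space $(X,d)$, so it is multiplicative convergent to some $x\in X$, that is, $x_n\rightarrow_* x$. Since $(x_n)$ is a sequence lying in $S$ and converging to $x$, the sequential description of the multiplicative closure forces $x\in\overline{S}$; and because $S$ is multiplicative closed we have $\overline{S}=S$, whence $x\in S$. Thus every multiplicative Cauchy sequence in $S$ converges to a point of $S$, which is precisely the assertion that $(S,d)$ is complete.

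For the converse, suppose $(S,d)$ is complete. Here I would use the characterization that $S$ is multiplicative closed if and only if the limit of every multiplicative convergent sequence lying in $S$ again belongs to $S$. So let $(x_n)$ be a sequence in $S$ with $x_n\rightarrow_* x$ for some $x\in X$. Being multiplicative convergent in $X$, the sequence $(x_n)$ is a multiplicative Cauchy sequence, and as it lies entirely in $S$ it is a multiplicative Cauchy sequence in $S$. Completeness of $(S,d)$ then yields a point $y\in S$ with $x_n\rightarrow_* y$. The sequence $(x_n)$ therefore converges to both $x$ and $y$, so the uniqueness of the multiplicative limit gives $x=y\in S$. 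Hence every multiplicative convergent sequence in $S$ has its limit in $S$, and $S$ is multiplicative closed.

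The one place that calls for a little care is this converse: completeness of $(S,d)$ produces a limit inside $S$, but a priori this limit need not coincide with the limit $x$ taken in the ambient space $X$. Reconciling the two notions of limit is exactly what the uniqueness lemma accomplishes, both of them being regarded as limits of the single sequence $(x_n)$ viewed in $X$. Once this identification is made, the argument closes, and there is no deeper obstacle beyond keeping the two completeness hypotheses and their associated limits properly matched.
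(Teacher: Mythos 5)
Your proof is correct and follows essentially the same route as the paper, whose proof simply cites the sequential characterization of multiplicative closed sets (Theorem 2.21) together with the fact that multiplicative convergent sequences are multiplicative Cauchy (Theorem 2.23). You have merely filled in the details the paper leaves implicit, including the appeal to uniqueness of multiplicative limits to identify the limit in $S$ with the limit in $X$, which is exactly the right way to close the converse direction.
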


\begin{proof}
By Theorem 2.21 and Theorem 2.23, it is clear.
\end{proof}

\section{Fixed point theorems}
\begin{defn}
Let $(X, d)$ be a multiplicative metric space. A mapping $f:X\rightarrow X$ is called multiplicative contraction if there exists a real constant $\lambda \in [0, 1)$
such that
\begin{equation*}
d(f(x_1),f(x_2))\leq d(x_1,x_2)^{\lambda} \text{ for all}~ x,y\in X.
\end{equation*}
\end{defn}

Now, based on the definition of multiplicative contraction, we introduce the following Banach contraction principle for multiplicative metric spaces.
\begin{thm}
Let $(X, d)$ be a multiplicative metric space and let $f:X\rightarrow X$ be a multiplicative contraction. If $(X, d)$ is complete, then $f$ has a unique fixed point.
\end{thm}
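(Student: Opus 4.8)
The plan is to run the classical Picard iteration scheme, transcribed multiplicatively. First I would fix an arbitrary $x_0\in X$ and define the orbit by $x_{n+1}=f(x_n)$, so that $x_n=f^n(x_0)$. Applying the contraction inequality repeatedly and arguing by induction gives $d(x_n,x_{n+1})\leq d(x_0,x_1)^{\lambda^{n}}$, the multiplicative analogue of geometric decay of successive distances.

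The heart of the argument is to show that $(x_n)$ is a multiplicative Cauchy sequence. For $m>n$, iterating the multiplicative triangle inequality (m3) together with the previous estimate yields
\begin{equation*}
d(x_n,x_m)\leq \prod_{k=n}^{m-1} d(x_k,x_{k+1})\leq d(x_0,x_1)^{\sum_{k=n}^{m-1}\lambda^{k}}\leq d(x_0,x_1)^{\lambda^{n}/(1-\lambda)}.
\end{equation*}
Since $\lambda\in[0,1)$ forces $\lambda^{n}\to 0$, and since $d(x_0,x_1)\geq 1$, the right-hand side tends multiplicatively to $1$; hence $d(x_n,x_m)\rightarrow_* 1$ as $m,n\to\infty$. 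By the lemma characterizing multiplicative Cauchy sequences, $(x_n)$ is multiplicative Cauchy, and completeness of $(X,d)$ supplies a limit $x\in X$ with $x_n\rightarrow_* x$.

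To see that $x$ is fixed I would estimate directly: by (m3) and the contraction property,
\begin{equation*}
d(x,f(x))\leq d(x,x_n)\cdot d(x_n,f(x))=d(x,x_n)\cdot d(f(x_{n-1}),f(x))\leq d(x,x_n)\cdot d(x_{n-1},x)^{\lambda}.
\end{equation*}
Both factors on the right tend multiplicatively to $1$ as $n\to\infty$ (using the convergence lemma and $\lambda<1$), so $d(x,f(x))=1$ and therefore $f(x)=x$; alternatively, one notes that a multiplicative contraction is multiplicative continuous (take $\delta=\varepsilon$), whence $x_{n+1}=f(x_n)\rightarrow_* f(x)$, and uniqueness of multiplicative limits forces $f(x)=x$. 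For uniqueness, if $f(x)=x$ and $f(y)=y$ then $d(x,y)=d(f(x),f(y))\leq d(x,y)^{\lambda}$, which for $d(x,y)\geq 1$ and $\lambda<1$ is possible only when $d(x,y)=1$, i.e. $x=y$ by (m1). The only step requiring genuine care is the Cauchy estimate: one must sum the geometric series in the exponent correctly and confirm that the resulting upper bound converges multiplicatively to $1$. Everything else is a routine transcription of the Banach argument into multiplicative notation.
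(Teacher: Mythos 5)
Your proposal is correct and follows essentially the same route as the paper's own proof: Picard iteration, the geometric-series exponent bound $d(x_n,x_m)\leq d(x_0,x_1)^{\lambda^n/(1-\lambda)}$ to get the multiplicative Cauchy property, completeness to obtain the limit, the same triangle-inequality estimate $d(x,f(x))\leq d(x,x_n)\cdot d(x_{n-1},x)^{\lambda}$ for fixedness, and the same uniqueness argument. The only (harmless) additions are your explicit remark that $d(x_0,x_1)\geq 1$ justifies the final bound and the alternative continuity-based argument for fixedness, neither of which changes the structure of the proof.
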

\begin{proof}
Consider a point $x_0 \in X$. Now we define a sequence $(x_n)$ in $X$ such that $x_n=fx_{n-1}$ for $n=1,2,...$
From the multiplicative contraction property of $f$, we have
\begin{equation*}
d(x_{n+1}, x_n)\leq d(x_n,x_{n-1})^{\lambda} \leq d(x_{n-1},x_{n-2})^{ \lambda^2}\leq \cdot \cdot \cdot \leq d(x_1,x_0)^{\lambda^n}.
\end{equation*} 
Let $m, n \in \mathbb{N}$ such that $m> n$, then we get
\begin{eqnarray}
d(x_m, x_n)&\leq& d(x_m, x_{m-1})\cdot\cdot\cdot d(x_{n+1},x_n)\nonumber \\
&\leq&d(x_1,x_0)^{\lambda^{m-1}+\cdot \cdot \cdot+ \lambda^n} \nonumber \\
&\leq&d(x_1,x_0)^{\frac{\lambda^n}{1-\lambda}}. \nonumber
\end{eqnarray}
This implies that $d(x_m,x_n)\rightarrow_* 1~(m,n\rightarrow \infty)$. Hence the sequence $(x_n)=(f^nx_0)$ is multiplicative Cauchy. By the completeness of 
$X$, there is $z\in X$ such that $x_n\rightarrow_* z~(n\rightarrow \infty)$. Moreover,
\begin{equation*}
d(fz,z)\leq d(fx_n, fz)\cdot d(fx_n,z)\leq d(x_n,z)^{\lambda }\cdot d(x_{n+1},z)\rightarrow_* 1~(n\rightarrow \infty),
\end{equation*}
which implies $d(fz,z)=1$. Therefore this says that $z$ is a fixed point of $f$; that is, $fz=z$.

Now, if there is another point $y$ such that $fy=y$, then 

\begin{equation*}
d(z,y)=d(fz,fy)\leq d(z,y)^{\lambda}. 
\end{equation*}
Therefore $d(z,y)=1$ and $y=z$. This implies that $z$ is  the unique fixed point of $f$. 
\end{proof}
\begin{cor}
Let $(X, d)$ be a complete multiplicative metric space. For $\varepsilon$ with $\varepsilon >1$ and $x_0\in X$, consider the multiplicative closed ball $\overline{B}_{\varepsilon}(x_0)$. 
Suppose the mapping $f:X\rightarrow X$ satisfies the contraction condition
\begin{equation*}
d(fx,fy)\leq d(x,y)^{\lambda}~ \text{for all x,y} \in \overline{B}_{\varepsilon}(x_0)
\end{equation*}
where $\lambda \in \left[0,  1\right)$ is a constant and $d(fx_0,x_0)\leq\varepsilon^{1-\lambda}$. Then $f$ has a unique fixed point in $\overline{B}_{\varepsilon}(x_0)$.
\end{cor}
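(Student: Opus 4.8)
The plan is to reduce the statement to the multiplicative Banach contraction principle (Theorem~3.2) by showing that $f$ restricts to a self-map of the closed ball $\overline{B}_{\varepsilon}(x_0)$ and that this ball is itself a complete multiplicative metric space.

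First I would verify the invariance $f(\overline{B}_{\varepsilon}(x_0))\subseteq\overline{B}_{\varepsilon}(x_0)$. Note that $x_0\in\overline{B}_{\varepsilon}(x_0)$ since $d(x_0,x_0)=1\le\varepsilon$. For an arbitrary $x\in\overline{B}_{\varepsilon}(x_0)$, i.e. $d(x,x_0)\le\varepsilon$, the multiplicative triangle inequality gives $d(fx,x_0)\le d(fx,fx_0)\cdot d(fx_0,x_0)$. Applying the contraction condition (valid because both $x$ and $x_0$ lie in the ball) yields $d(fx,fx_0)\le d(x,x_0)^{\lambda}\le\varepsilon^{\lambda}$, while the hypothesis supplies $d(fx_0,x_0)\le\varepsilon^{1-\lambda}$. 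Multiplying these, $d(fx,x_0)\le\varepsilon^{\lambda}\cdot\varepsilon^{1-\lambda}=\varepsilon$, so $fx\in\overline{B}_{\varepsilon}(x_0)$. This is the computational heart of the argument: the exponent bookkeeping $\lambda+(1-\lambda)=1$ is exactly what the displacement hypothesis $d(fx_0,x_0)\le\varepsilon^{1-\lambda}$ is tailored to close.

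Next I would observe that the closed ball is multiplicative closed. If $(x_n)$ is a sequence in $\overline{B}_{\varepsilon}(x_0)$ with $x_n\rightarrow_*x$, then $d(x_n,x_0)\le\varepsilon$ for every $n$, and by the continuity of the multiplicative metric (the theorem asserting $d(x_n,y_n)\rightarrow_*d(x,y)$ whenever $x_n\rightarrow_*x$ and $y_n\rightarrow_*y$, applied with $y_n\equiv x_0$) we get $d(x_n,x_0)\rightarrow_*d(x,x_0)$, which forces $d(x,x_0)\le\varepsilon$. Hence $x\in\overline{B}_{\varepsilon}(x_0)$, and the sequential characterization of multiplicative closed sets shows the ball is closed. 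Consequently, by the theorem stating that a subset of a complete multiplicative metric space is complete if and only if it is multiplicative closed, $(\overline{B}_{\varepsilon}(x_0),d)$ is itself a complete multiplicative metric space.

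Finally, combining the two previous steps, $f$ restricted to $\overline{B}_{\varepsilon}(x_0)$ is a multiplicative contraction (with the same constant $\lambda$) on a complete multiplicative metric space, so Theorem~3.2 applies directly and produces a unique fixed point of $f$ in $\overline{B}_{\varepsilon}(x_0)$. I expect the only genuine subtlety to be the invariance step together with the passage from the coordinatewise bounds to the limit inequality $d(x,x_0)\le\varepsilon$ used in checking closedness; everything after that is a verbatim appeal to the global theorem.
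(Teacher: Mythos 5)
Your proposal is correct and follows essentially the same route as the paper's proof: show that $f$ maps $\overline{B}_{\varepsilon}(x_0)$ into itself (via the identical exponent computation $d(fx,x_0)\le d(x,x_0)^{\lambda}\cdot\varepsilon^{1-\lambda}\le\varepsilon^{\lambda}\cdot\varepsilon^{1-\lambda}=\varepsilon$), show the ball is a complete multiplicative metric space, and then apply Theorem 3.2 to the restriction. The only cosmetic difference is in the completeness step: the paper argues directly that a multiplicative Cauchy sequence in the ball converges in $X$ and its limit remains in the ball via $d(x_0,x)\le d(x_n,x_0)\cdot d(x_n,x)\le\varepsilon\cdot d(x_n,x)$, whereas you establish closedness of the ball (using the continuity of the multiplicative metric, Theorem 2.26, and the sequential characterization in Theorem 2.21) and then cite Theorem 2.35; the two arguments are the same in substance.
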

\begin{proof}
We only need to prove that $\overline{B}_{\varepsilon}(x_0)$ is complete and $fx\in\overline{B}_{\varepsilon}(x_0)$ for all $x\in\overline{B}_{\varepsilon}(x_0)$.
Suppose $(x_n)$ is a multiplicative Cauchy sequence in $\overline{B}_{\varepsilon}(x_0)$. Then $(x_n)$ is also a multiplicative Cauchy sequence in $X$. By the completeness of $X$, there exists $x\in X$ such that $x_n\rightarrow _*x$. We have 
\begin{equation*}
d(x_0,x)\leq d(x_n,x_0)\cdot d(x_n,x)\leq d(x_n,x)\cdot \varepsilon
\end{equation*}
Since $x_n\rightarrow _*x$, $d(x_n,x)\rightarrow_*1$. Hence $d(x_0,x)\leq \varepsilon$, and $x\in \overline{B}_{\varepsilon}(x_0)$. It follows that 
$\overline{B}_{\varepsilon}(x_0)$ is complete.

For every $x\in \overline{B}_{\varepsilon}(x_0)$,
\begin{equation*}
d(x_0, fx)\leq d(fx_0,x_0)\cdot d(fx_0,fx)\leq \varepsilon^{1-\lambda}\cdot d(x_0,x)^{\lambda} \leq \varepsilon^{1-\lambda}\cdot \varepsilon^{\lambda}=\varepsilon.
\end{equation*}
Thus $fx \in \overline{B}_{\varepsilon}(x_0) $.
\end{proof}

\begin{cor}
Let $(X, d)$ be a complete multiplicative metric space.~If a mapping $f:X\rightarrow X$ satisfies for some positive integer $n$,
\begin{equation*}
d(f^nx,f^ny)\leq d(x,y)^{\lambda}~\text{for all} ~x, y\in X,
\end{equation*}
where $\lambda \in \left[0, 1\right)$ is a constant, then $f$ has a unique fixed point in $X$.
\end{cor}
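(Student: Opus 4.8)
The plan is to reduce this statement to the multiplicative Banach contraction principle already established in Theorem~3.2, applied not to $f$ itself but to the iterate $g:=f^{n}$. The hypothesis $d(f^{n}x,f^{n}y)\leq d(x,y)^{\lambda}$ for all $x,y\in X$ says precisely that $g$ is a multiplicative contraction with constant $\lambda\in[0,1)$. Since $(X,d)$ is a complete multiplicative metric space, Theorem~3.2 applies directly to $g$ and yields a \emph{unique} point $z\in X$ with $g(z)=z$, that is, $f^{n}z=z$.

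The first key step is to promote this fixed point of $f^{n}$ into a fixed point of $f$ itself. I would apply $f$ to both sides of $f^{n}z=z$ and use that iterates commute, obtaining
\[
f^{n}(fz)=f(f^{n}z)=fz.
\]
This shows that $fz$ is \emph{also} a fixed point of $g=f^{n}$. Because Theorem~3.2 guarantees that $g$ has only one fixed point, the two fixed points $z$ and $fz$ must coincide, i.e. $fz=z$. Hence $z$ is a fixed point of $f$.

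For uniqueness, I would observe that every fixed point of $f$ is automatically a fixed point of $f^{n}$: if $fw=w$, then iterating the relation $n$ times gives $f^{n}w=w$. Since $g=f^{n}$ admits only the single fixed point $z$, any such $w$ must equal $z$, so $f$ cannot have more than one fixed point. Combined with the existence established above, this shows $z$ is the unique fixed point of $f$ in $X$.

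I do not anticipate a serious obstacle here, as the entire argument rests on the uniqueness clause in Theorem~3.2. The only genuinely substantive idea is the observation that $fz$ is a fixed point of $f^{n}$, forcing $fz=z$ by uniqueness; everything else is routine bookkeeping about iterates and requires no use of the multiplicative structure beyond what Theorem~3.2 already encapsulates.
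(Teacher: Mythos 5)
Your proposal is correct and follows exactly the paper's own argument: apply Theorem~3.2 to $f^{n}$, observe that $f^{n}(fz)=f(f^{n}z)=fz$ so that uniqueness forces $fz=z$, and deduce uniqueness for $f$ from the fact that every fixed point of $f$ is a fixed point of $f^{n}$. No differences to report.
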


\begin{proof}
From Theorem 3.2, $f^n$ has a unique fixed point $z\in X$. But $f^n(fz)=f(f^nz)=fz$, so $fz$ is also a fixed point of $f^n$. Hence $fz=z$, $z$ is a fixed point of $f$. Since the fixed point of $f$
is also fixed point of $f^n$, the fixed point of $f$ is unique.
\end{proof}
\begin{thm}
Let $(X,d)$ be a complete multiplicative metric space. Suppose the mapping $f:X\rightarrow X$ satisfies the contraction condition
\begin{equation*}
d(fx, fy)\leq (d(fx, x)\cdot d(fy, y))^{\lambda}~ \text{for all}~x,y\in X,
\end{equation*}
where $\lambda \in \left[0, \frac{1}{2} \right)$ is a constant. Then $f$ has a unique fixed point in $X$. And for any $x\in X$, iterative sequence $(f^nx)$ converges to the fixed point.
 \end{thm}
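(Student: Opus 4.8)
The plan is to follow the Picard-iteration strategy of Theorem 3.2, with one extra algebraic twist forced by the self-referential shape of this Kannan-type hypothesis. First I would fix an arbitrary $x_0\in X$ and form the orbit $x_n=f^n x_0$. Applying the contraction condition with $x=x_n$ and $y=x_{n-1}$ gives
\begin{equation*}
d(x_{n+1},x_n)\leq\bigl(d(x_{n+1},x_n)\cdot d(x_n,x_{n-1})\bigr)^{\lambda}=d(x_{n+1},x_n)^{\lambda}\,d(x_n,x_{n-1})^{\lambda}.
\end{equation*}
Dividing both sides by the positive quantity $d(x_{n+1},x_n)^{\lambda}$ and then raising to the power $1/(1-\lambda)$, which is legitimate because $1-\lambda>0$, yields
\begin{equation*}
d(x_{n+1},x_n)\leq d(x_n,x_{n-1})^{h},\qquad h:=\frac{\lambda}{1-\lambda}.
\end{equation*}
The hypothesis $\lambda<\tfrac12$ is exactly what makes $h\in[0,1)$, so iterating gives $d(x_{n+1},x_n)\leq d(x_1,x_0)^{h^n}$. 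I expect this manipulation to be the crux of the proof: the quantity $d(x_{n+1},x_n)$ to be estimated also appears on the right-hand side, and only the combination of this rearrangement with $\lambda<\tfrac12$ converts the estimate into a genuine geometric decay with contraction ratio $h$.

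With this estimate in hand the remainder mirrors Theorem 3.2. For $m>n$ the multiplicative triangle inequality telescopes to
\begin{equation*}
d(x_m,x_n)\leq d(x_1,x_0)^{h^{m-1}+\cdots+h^{n}}\leq d(x_1,x_0)^{\frac{h^{n}}{1-h}},
\end{equation*}
whose exponent tends to $0$; hence $d(x_m,x_n)\rightarrow_*1$, so by Lemma 2.29 the orbit $(x_n)$ is multiplicative Cauchy, and completeness supplies a point $z$ with $x_n\rightarrow_* z$.

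To verify that $z$ is fixed I would bound, using $x_{n+1}=fx_n$ and the hypothesis with $x=z$, $y=x_n$,
\begin{equation*}
d(fz,z)\leq d(fz,x_{n+1})\cdot d(x_{n+1},z)\leq d(fz,z)^{\lambda}\,d(x_{n+1},x_n)^{\lambda}\,d(x_{n+1},z).
\end{equation*}
Letting $n\to\infty$, Lemma 2.17 gives $d(x_{n+1},z)\rightarrow_*1$ while the estimate above gives $d(x_{n+1},x_n)\rightarrow_*1$, so the right-hand side tends to $d(fz,z)^{\lambda}$ and hence $d(fz,z)\leq d(fz,z)^{\lambda}$. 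Since $d(fz,z)\geq1$ by (m1) and $\lambda<1$, this forces $d(fz,z)=1$, i.e.\ $fz=z$. Uniqueness is immediate: if $fw=w$ as well, then $d(z,w)=d(fz,fw)\leq\bigl(d(fz,z)\cdot d(fw,w)\bigr)^{\lambda}=1$, so $w=z$. Finally, convergence of $(f^n x)$ to this unique point for every starting value $x$ is already contained in the Cauchy-plus-completeness construction above.
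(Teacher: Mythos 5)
Your proposal is correct and follows essentially the same route as the paper: the same Picard iteration, the same rearrangement yielding $d(x_{n+1},x_n)\leq d(x_n,x_{n-1})^{h}$ with $h=\lambda/(1-\lambda)$, the same telescoping Cauchy estimate, and the same final inequality $d(fz,z)\leq d(fz,z)^{\lambda}\, d(x_{n+1},x_n)^{\lambda}\, d(x_{n+1},z)$ (you pass to the limit before isolating $d(fz,z)$, the paper after, which is immaterial). You also spell out the uniqueness argument that the paper leaves as an easy exercise.
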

\begin{proof}
Choose $x_0\in X$. Set $x_1=fx_0,x_2=fx_1=f^2x_0,...,x_{n+1}=fx_n=f^{n+1}x_0,...$

\noindent we have
\begin{eqnarray}
d(x_{n+1}, x_n)=d(fx_n,fx_{n-1})&\leq& (d(fx_n,x_n)\cdot d(fx_{n-1},x_{n-1}))^{\lambda} \nonumber \\
&=&(d(x_{n+1},x_n)\cdot d(x_n,x_{n-1}))^{\lambda}. \nonumber
\end{eqnarray}
Thus we have
\begin{equation*}
d(x_{n+1},x_n)\leq (d(x_n,x_{n-1}))^{\frac{\lambda}{1-\lambda}}= d(x_n,x_{n-1})^h, \nonumber
\end{equation*}
where $h=\frac{\lambda}{1-\lambda}$. For $n>m$,
\begin{eqnarray}
d(x_n, x_m)&\leq& d(x_n,x_{n-1})\cdot d(x_{n-1}, x_{n-2})\cdot \cdot \cdot d(x_{m+1}, x_m) \nonumber \\
&\leq&d(x_1,x_0)^{h^{n-1}+h^{n-2}+\cdot \cdot \cdot + h^m} \leq d(x_1,x_0)^{\frac{h^m}{1-h}} \nonumber 
\end{eqnarray}
This implies $d(x_n,x_m)\rightarrow_* 1(n,m\rightarrow \infty)$. Hence $(x_n)$ is a Cauchy sequence. By the completeness of $X$, 
there is $z\in X$ such that $x_n\rightarrow_* z~(n\rightarrow \infty)$. Since 
\begin{eqnarray}
d(fz, z)&\leq& d(fx_n, fz)\cdot d(fx_n, z) \nonumber \\
&\leq& (d(fx_n, x_n)\cdot d(fz, z))^{\lambda} \cdot d(x_{n+1}, z), \nonumber
\end{eqnarray}
we have
\begin{equation*}
d(fz, z)\leq (d(fx_n,x_n)^{\lambda}\cdot d(x_{n+1}, z))^{\frac{1}{1-\lambda}} \rightarrow_* 1~(n\rightarrow \infty ).
\end{equation*}
Hence $d(fz,z)=1$. This implies $fz=z$. Finally, it is easy to show that the fixed point of $f$ is unique.
\end{proof}
\begin{thm}
Let $(X,d)$ be a complete multiplicative metric space. Suppose the mapping $f:X\rightarrow X$ satisfies the contraction condition
\begin{equation*}
d(fx, fy)\leq (d(fx, y)\cdot d(fy, x))^{\lambda}~ \text{for all}~x,y\in X,
\end{equation*}
where $\lambda \in \left[0, \frac{1}{2} \right)$ is a constant. Then $f$ has a unique fixed point in $X$. And for any $x\in X$, iterative sequence $(f^nx)$ converges to the fixed point.
\end{thm}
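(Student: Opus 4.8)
The plan is to follow the template of Theorem 3.5, since the present hypothesis is the multiplicative Chatterjea-type counterpart of the Kannan-type condition treated there. I would fix $x_0\in X$ and build the Picard iteration $x_{n+1}=fx_n$. Applying the contraction with $x=x_n$, $y=x_{n-1}$ gives
\[
d(x_{n+1},x_n)\leq \bigl(d(x_{n+1},x_{n-1})\cdot d(x_n,x_n)\bigr)^{\lambda}.
\]
The decisive point is that $d(x_n,x_n)=1$ by (m1), so one of the two cross-terms disappears. Combining this with the multiplicative triangle inequality $d(x_{n+1},x_{n-1})\leq d(x_{n+1},x_n)\cdot d(x_n,x_{n-1})$ and collecting the $d(x_{n+1},x_n)$ factors onto one side, I obtain $d(x_{n+1},x_n)\leq d(x_n,x_{n-1})^{h}$ with $h=\frac{\lambda}{1-\lambda}$. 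Since $\lambda\in[0,\tfrac12)$ forces $h\in[0,1)$, this is precisely the recursion from Theorem 3.5.

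From this point the Cauchy argument is word-for-word that of Theorem 3.5: iterating yields $d(x_{n+1},x_n)\leq d(x_1,x_0)^{h^n}$, and for $n>m$ the triangle inequality together with the geometric-series bound gives $d(x_n,x_m)\leq d(x_1,x_0)^{\frac{h^m}{1-h}}$, so that $d(x_n,x_m)\rightarrow_* 1$. By Lemma 2.29 the sequence is multiplicative Cauchy, and completeness of $X$ produces a point $z\in X$ with $x_n\rightarrow_* z$.

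The genuinely new work, and the step I expect to be the \emph{main obstacle}, is showing $fz=z$. I would start from
\[
d(fz,z)\leq d(fz,fx_n)\cdot d(x_{n+1},z)
\]
and bound $d(fz,fx_n)$ by the hypothesis with $x=z$, $y=x_n$, producing a $\lambda$-power of $d(fz,x_n)\cdot d(x_{n+1},z)$. The subtlety is that $d(fz,x_n)$ still carries the unknown $fz$; I would fold it back through $d(fz,x_n)\leq d(fz,z)\cdot d(z,x_n)$, so that $d(fz,z)$ reappears on the right with exponent $\lambda$. Transposing that factor and raising to the power $\frac{1}{1-\lambda}$ isolates $d(fz,z)$ as dominated by a product of terms each tending multiplicatively to $1$ (namely $d(z,x_n)$ and $d(x_{n+1},z)$). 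Letting $n\to\infty$ and invoking Theorem 2.30 forces $d(fz,z)\leq 1$, hence $d(fz,z)=1$ by (m1) and $fz=z$.

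Uniqueness is then immediate: if also $fy=y$, the hypothesis applied to the pair $(z,y)$ collapses both cross-terms, since $d(fz,y)=d(z,y)$ and $d(fy,z)=d(y,z)$, giving $d(z,y)\leq d(z,y)^{2\lambda}$; because $1-2\lambda>0$ while $d(z,y)\geq 1$, this is possible only if $d(z,y)=1$, that is $y=z$.
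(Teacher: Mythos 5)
Your proposal is correct and follows essentially the same route as the paper's own proof: the same recursion $d(x_{n+1},x_n)\leq d(x_n,x_{n-1})^{h}$ obtained via $d(x_n,x_n)=1$ and the triangle inequality, the same geometric-series Cauchy estimate, and the same device of folding $d(fz,x_n)\leq d(fz,z)\cdot d(z,x_n)$ back into the contraction bound and solving for $d(fz,z)$. The only addition is your explicit uniqueness argument via $d(z,y)\leq d(z,y)^{2\lambda}$, which the paper leaves to the reader as ``easy''.
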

\begin{proof}
Choose $x_0\in X$. Set $x_1=fx_0,x_2=fx_1=f^2x_0,...,x_{n+1}=fx_n=f^{n+1}x_0,...$

\noindent we have
\begin{eqnarray}
d(x_{n+1}, x_n)=d(fx_n,fx_{n-1})&\leq& (d(fx_n,x_{n-1})\cdot d(fx_{n-1},x_n))^{\lambda} \nonumber \\
&\leq&(d(x_{n+1},x_n)\cdot d(x_n,x_{n-1}))^{\lambda}. \nonumber
\end{eqnarray}
Thus we have
\begin{equation*}
d(x_{n+1},x_n)\leq (d(x_n,x_{n-1}))^{\frac{\lambda}{1-\lambda}}= d(x_n,x_{n-1})^h, \nonumber
\end{equation*}
where $h=\frac{\lambda}{1-\lambda}$. For $n>m$,
\begin{eqnarray}
d(x_n, x_m)&\leq& d(x_n,x_{n-1})\cdot d(x_{n-1}, x_{n-2})\cdot \cdot \cdot d(x_{m+1}, x_m) \nonumber \\
&\leq&d(x_1,x_0)^{h^{n-1}+h^{n-2}+\cdot \cdot \cdot + h^m} \leq d(x_1,x_0)^{\frac{h^m}{1-h}} \nonumber 
\end{eqnarray}
This implies $d(x_n,x_m)\rightarrow_* 1(n,m\rightarrow \infty)$. Hence $(x_n)$ is a Cauchy sequence. By the multiplicative completeness of $X$, 
there is $z\in X$ such that $x_n\rightarrow_* z~(n\rightarrow \infty)$. Since 
\begin{eqnarray}
d(fz, z)&\leq& d(fx_n, fz)\cdot d(fx_n, z) \nonumber \\
&\leq& (d(fz, x_n)\cdot d(fx_{n}, z))^{\lambda} \cdot d(x_{n+1}, z), \nonumber \\
&\leq&(d(fz, z) \cdot d(x_n, z)\cdot d(x_{n+1}, z))^{\lambda} \cdot d(x_{n+1}, z) \nonumber 
\end{eqnarray}
we have
\begin{equation*}
d(fz, z)\leq ((d(x_{n+1},z) \cdot d(x_n, z))^{\lambda}\cdot d(x_{n+1}, z))^{\frac{1}{1-\lambda}} \rightarrow_* 1~(n\rightarrow \infty ).
\end{equation*}
Hence $d(fz,z)=1$. This implies $fz=z$. So $z$ is a fixed point of $f$. Finally, it is easy to show that the fixed point of $f$ is unique.
\end{proof}
\begin{rem}
Theorems 3.2, 3.5 and 3.6 carry some fixed point theorems of contraction mappings in metric spaces to multiplicative metric spaces.
\end{rem}

We conclude with  the following two examples: \newline
 Let $X=\left\{   (x,1)\in \mathbb{R}^2:1\leq x \leq 2 \right\} \cup \left\{   (1,x)\in \mathbb{R}^2:1\leq x \leq 2 \right\}$. Consider a mapping
$d:X\times X \rightarrow \mathbb{R}$ defined by 
\begin{equation*}
d((a,b),(c,d))=\left(\left| \frac{a}{c}\right|^*\cdot \left| \frac{b}{d}\right|^*\right)^{\frac{1}{3}}
\end{equation*}
Then $(X,d)$ is a complete multiplicative metric space. Let $f:X\rightarrow X$ be a mapping given as follows
\begin{equation*}
f(x,1)=(1,\sqrt{x})~\text{and}~f(1,x)=(\sqrt{x}, 1)
\end{equation*} 
Thus the mapping holds the  following multiplicative contraction condition 
\begin{equation*}
d(f(a,b),f(c,d))\leq d((a,b),(c,d))^{\lambda}~\text{for all}~(a,b),(c,d)\in X,
\end{equation*}
with constant $\lambda=\frac{1}{2}\in [0,1)$. It is obvious that $f$ has a unique fixed point $(1,1)\in X$.

If we let $X=[0.1, 1]$, then $X$ is a complete multiplicative metric space with respect to the multiplicative metric $| . |^{*}$. Thus a mapping $f:X\rightarrow X$ defined as $f(x)=e^{x-1-\frac{x^3}{10}}$ satisfies the following multiplicative contraction:
\begin{equation}
\left| \frac{f(x)}{f(y)}\right |^{*}\leq \left( \left| \frac{x}{y} \right |^{*}\right)^{\lambda} \text{for all}~x, y\in X,
\end{equation}
where $\lambda=0.997$. Finally,  we can see that $f$ has a unique fixed point $ 0.7411317711\in X $.

\end{document}